\newcommand{\margnote}[1]{
\ifthenelse{\boolean{shownotes}}%
{\marginpar{\raggedright\tiny\texttt{#1}}}%
{}%
}
\newcommand{\hole}[1]{
\ifthenelse{\boolean{shownotes}}%
{\begin{center} \fbox{ \rule {.25cm}{0cm}
\rule[-.1cm]{0cm}{.4cm} \parbox{.85\textwidth}{\begin{center}
\texttt{#1}\end{center}} \rule {.25cm}{0cm}}\end{center}}
{}
}
\newtheorem{thm}{Theorem}[section]
\theoremstyle{plain}
\newtheorem{lemma}[thm]{Lemma}
\theoremstyle{plain}
\newtheorem*{thm*}{Theorem}
\theoremstyle{plain}
\newtheorem{prop}[thm]{Proposition}
\theoremstyle{plain}
\theoremstyle{definition}
\newtheorem{definition}[thm]{Definition}    
\theoremstyle{remark}
\newtheorem{remark}{Remark}
\newcommand{\T}{\mathbb{T}}
\newcommand{\R}{\mathbb{R}}
\newcommand{\e}{\varepsilon}
\newcommand{\re}{\rho_{\varepsilon}}
\newcommand{\me}{\mu_{\varepsilon}}
\newcommand{\phie}{\phi_{\varepsilon}}
\newcommand{\Fe}{F_{\varepsilon}}
\newcommand{\weakto}{\rightharpoonup}
\newcommand{\del}{\partial}
\newcommand{\dx}{\partial_{x}}
\newcommand{\ddx}{\partial^2_{x}}
\newcommand{\dxk}{\partial^k_{x}}
\newcommand{\dxkpo}{\partial^{k+1}_{x}}
\newcommand{\lese}{\lesssim_{\e}}
\newcommand{\les}{\lesssim}
\newcommand{\ue}{u_{\e}}
\newcommand{\deps}{\delta_{\e}}
\newcommand{\A}{\mathcal{Q}}
\newcommand{\tk}{\tilde{\kappa}_{\e}(\rho)}
\newcommand{\bm}{\bar{\mu}(\rho)}
\numberwithin{equation}{section}
\subjclass{Primary: 35G20; Secondary: 35K65, 35D30.}
\keywords{Fourth-Order Equations, Korteweg Energy, Gradient Flows.}
\begin{document}

\title[\empty]{On the existence of non-negative weak solutions for $1D$ fourth order equations of gradient flow type}

\author[S. Georgiadis]{Stefanos Georgiadis}
\address[S. Georgiadis]{Division of Science and Mubadala Arabian Center for Climate and Environmental Science\\ New York University
Abu Dhabi \\ United Arab Emirates}
\email[]{\href{stefanos.georgiadis@nyu.edu}{stefanos.georgiadis@nyu.edu}}

\author[S. Spirito]{Stefano Spirito}
\address[S. Spirito]{DISIM - Dipartimento di Ingegneria e Scienze dell'Informazione e Matematica\\ Universit\`a  degli Studi dell'Aquila \\Via Vetoio \\ 67100 L'Aquila \\ Italy}
\email[]{\href{stefano.spirito@univaq.it}{stefano.spirito@univaq.it}}

\begin{abstract}
  In this paper, we consider a family of one-dimensional fourth order evolution equations arising as gradient flows of the Korteweg energy, i.e. the $L^2$-norm of the first derivative of some power of the density. This family of equations generalizes the Quantum-Drift-Diffusion equation and the Thin-Film equation. We prove the global-in-time existence of {\em non-negative} weak solutions without requiring any upper bound on the exponent of the power of the density in the energy. 
 \end{abstract}

\maketitle
\section{Introduction}
\subsection{Main result and state of the art} Let $T>0$ and $\T$ be the $1$-dimensional flat-torus. For $\alpha\in\R\setminus\{0\}$, we consider in $(0,T)\times\T$ the fourth-order equation for the positive unknown $\rho:(0,T)\times\T\to\R^+$ given by
\begin{equation} \label{eq:B}
\partial_t \rho +\frac{1}{\alpha} \, \partial_x \left( \rho \, \partial_x \left( \rho^{\alpha - 1} \, \partial_x^2 \rho^\alpha \right) \right)=0.
\end{equation}
Equation \eqref{eq:B} has been proposed by Denzler and McCann in \cite{DM} in the study of certain diffusion processes, and then studied by Matthes, McCann, and Savar\'e in \cite{MMS}. We remark that \eqref{eq:B} can be equivalently written for $\beta=2\alpha-2$ as 
\begin{equation}\label{eq:gfb}
\partial_t\rho+\,\del_x\left(\rho\del_x\left(\del_x(\rho^{\beta}\del_x\rho)-\beta\frac{\rho^{\beta-1}}{2}|\del_x\rho|^2\right)\right)=0.
\end{equation}
In general, if we define the \emph{generalized Fisher} information functional 
\begin{equation}\label{eq:ken}
\mathcal{F}[\rho] = \int \kappa(\rho) |\partial_x \rho|^2 \, dx,
\end{equation} 
for a general smooth $\kappa:(0,\infty)\to (0,\infty)$, the corresponding \emph{Wasserstein gradient flow}, see \cite{O01, DM}, is formally given by 
\begin{equation}\label{eq:gfg}
\partial_t\rho+\del_x\left(\rho\del_x\left(\del_x(\kappa(\rho)\del_x\rho)-\frac{1}{2}\kappa'(\rho)|\del_x\rho|^2\right)\right)=0.
\end{equation}
In particular, \eqref{eq:gfb} corresponds to \eqref{eq:gfg} when $\kappa(\rho)=\rho^{\beta}$.\\

In some contexts \eqref{eq:ken} is called the {\em Korteweg energy functional}, \cite{K01}, and the dynamics driven by this type of energy appears in a wide range of physical and mathematical contexts, including fluid dynamics, quantum mechanics, capillary fluids, and interface evolution. In particular, equation \eqref{eq:gfg} arises also in the relaxation limit of several equations of fluid mechanics, \cite{GLT17, ACCLS21,AMZ24}. For the physical background on Korteweg theory we refer to the seminal paper of Dunn and Serrin \cite{DS85}.\\ 

The main result of this work is the construction of global-in-time {\em non-negative} weak solutions to \eqref{eq:gfb} with  initial condition 
\begin{equation}\label{eq:gfgid}
\rho|_{t=0}=\rho_0\geq 0.
\end{equation}
We assume only mild regularity hypotheses on $\rho_0$ and we consider the range \(\beta > -3\). To make things more precise, our main theorem is the following. For the definition of weak solution we refer to Section \ref{sec:ws}.
\begin{thm}\label{teo:main}
Assume that $\beta>-3$.  Let $\rho^0\in L^{1}(\T)$ be such that $\rho^0\geq 0$ if $\beta>-2$, and $m^0:=\min_{x\in\T}\rho^0(x)>0$ if $-3<\beta\leq-2$. Assume also that 
\begin{equation}\label{eq:idprecise}
\begin{aligned}
&\del_x\left(\rho^0\right)^{\frac{\beta}{2}+1}\in L^2(\T),& &\left(\rho^0\right)^{\frac{\beta+3}{2}}\in L^{1}(\T), & &\mbox{ if }\beta\not=-2,&\\
&\del_x\ln\rho^0\in L^2(\T),& & & &\mbox{ if }\beta=-2.&
\end{aligned}
\end{equation}
Then, there exists a global-in-time non-negative weak solution of \eqref{eq:gfb}-\eqref{eq:gfgid}, in the sense of Definition \ref{def:ws}.
\end{thm}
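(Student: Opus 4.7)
The natural plan is to construct $\rho$ as a limit, as $\e\to 0$, of solutions to a non-degenerate regularized problem. Concretely, I replace the mobility $\kappa(\rho)=\rho^{\beta}$ by a family $\kappa_\e$ which is smooth, bounded above and bounded away from $0$ on $[0,\infty)$ (for instance a suitable cut-off of $(\rho+\e)^{\beta}$), and approximate the initial datum by $\rho^0_\e=\rho^0\ast\eta_\e+\e$, which is smooth and strictly positive. For each fixed $\e>0$, a smooth positive solution $\re$ of the corresponding equation can be obtained by classical parabolic methods: a Galerkin scheme with a fixed-point argument, or a minimizing-movement time-discretization \`a la Jordan--Kinderlehrer--Otto exploiting the Wasserstein gradient flow structure of \eqref{eq:B}.

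\textbf{Uniform a priori estimates.} The core of the argument is to derive $\e$-uniform bounds on $\re$. Mass is conserved, $\|\re(t)\|_{L^{1}}=\|\rho^0_\e\|_{L^{1}}$, directly from the divergence form. The gradient-flow structure furnishes the energy--dissipation inequality
\begin{equation*}
\int_\T \kappa_\e(\re(t))|\del_x\re(t)|^{2}\,dx + \int_0^t\!\!\int_\T \re\,|\del_x\mu_\e|^{2}\,dx\,ds \;\leq\; \int_\T \kappa_\e(\rho^0_\e)|\del_x\rho^0_\e|^{2}\,dx,
\end{equation*}
where $\mu_\e$ is the regularized chemical potential associated with $\mathcal{F}_\e$. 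This controls $\del_x(\re^{\beta/2+1})$ in $L^{\infty}_{t}L^{2}_{x}$ (or $\del_x\ln\re$ when $\beta=-2$) and $\sqrt{\re}\,\del_x\mu_\e$ in $L^{2}_{t,x}$. A Bernis--Friedman-type auxiliary entropy built from $\int\re^{\beta+3}\,dx$ (whose initial finiteness is guaranteed by the hypothesis $(\rho^0)^{(\beta+3)/2}\in L^{1}(\T)$) then provides second-derivative regularity for a suitable power of $\re$; in the subcase $-3<\beta\leq-2$ it also yields a lower bound $\re\geq m(t)>0$ that keeps the solution away from the singularity of $\kappa$. Sobolev interpolation on $\T$ combined with mass conservation upgrades the first-derivative bound into $L^{\infty}_{t,x}$ estimates on the pertinent powers of $\re$, with no upper restriction on $\beta$.

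\textbf{Compactness and the main obstacle.} Combining the above bounds with a bound on $\del_t\re$ extracted directly from the equation in a negative Sobolev norm, the Aubin--Lions lemma yields a subsequence $\re\to\rho$ strongly in a suitable $L^{p}$ space, together with weak convergence of the relevant derivatives. Non-negativity of $\rho$ is inherited from the strict positivity of $\re$. The main technical difficulty is identifying the limit of the nonlinear flux $\re\,\del_x\!\bigl(\del_x(\re^{\beta}\del_x\re)-\tfrac{\beta}{2}\re^{\beta-1}|\del_x\re|^{2}\bigr)$. Since the energy controls $\del_x\re^{\beta/2+1}$ rather than $\del_x\re$ itself, the flux has to be recast in terms of quantities that are uniformly bounded---such as $\re^{\beta/2+1}$ and $\del_x^{2}\re^{(\beta+3)/2}$---before strong compactness of $\re$ can be used to pass to the limit. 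This reformulation is especially delicate for large positive $\beta$, the regime motivating the present work, because $\re^{\beta}$ is then very steep and only sharp $L^{\infty}$ control---obtained from the energy via interpolation---makes the flux tractable. When $\beta>-2$, one must additionally justify the vanishing of the flux on the degeneracy set $\{\rho=0\}$ in the limit; this is handled by inserting a truncation in the test function in the weak formulation and letting the truncation parameter go to $0$ after $\e\to 0$.
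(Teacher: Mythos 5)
Your overall skeleton — regularize, prove an energy estimate and a Bernis–Friedman-type entropy estimate, extract $\e$-uniform bounds, and pass to the limit by compactness — matches the paper's strategy. However, there are genuine gaps in the way you propose to carry it out, centered on the choice of $\kappa_\e$.

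\textbf{The cut-off $\kappa_\e$ destroys the two mechanisms the proof needs.} You propose to take $\kappa_\e$ smooth, \emph{bounded above and below} on $[0,\infty)$, e.g.\ a truncation of $(\rho+\e)^\beta$. The paper instead takes $\kappa_\e(\rho)=\rho^\beta+2\e\rho^{(\beta-2)/2}+\e^2\rho^{-2}$, which deliberately \emph{blows up} like $\e^2\rho^{-2}$ as $\rho\to 0$. This singular term is essential: via the energy estimate it gives $\e^2\sup_t\int|\del_x\ln\re|^2\,dx\leq K^2$, and in 1D this Sobolev control implies the quantitative pointwise bounds $e^{-K/\e}\leq\re\leq e^{K/\e}$. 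That positivity is what makes the approximate problem uniformly parabolic and lets one prove global smooth solutions at fixed $\e$. With a bounded truncated $\kappa_\e$ you have no mechanism keeping $\re$ away from $0$, so the claim that "$\re$ is smooth and strictly positive" and that "non-negativity of $\rho$ is inherited from the strict positivity of $\re$" is unsupported. (Your suggested lower bound $\re\geq m(t)>0$ from the entropy only works for $-3<\beta\leq-2$ in the limit problem; at the approximate level you need positivity for all $\beta>-3$.) Second, and independently, the entropy estimate is not robust under truncation. The derivation hinges on the identity $\rho\del_x\bigl(\del_x(\kappa_\e\del_x\rho)-\tfrac12\kappa_\e'|\del_x\rho|^2\bigr)=\del_x(\rho\me'(\rho)\del_x^2\phie(\rho))$ (Proposition \ref{prop:equikort}), which requires the exact structure $\kappa_\e=(\me')^2/\rho$ and $\rho\phie'=\me'$, together with the pointwise condition $\rho|\me''|\lesssim\me'$ of Lemma \ref{lem:estkor}. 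A generic cut-off of $(\rho+\e)^\beta$ does not come with a compatible pair $(\me,\phie)$ satisfying these relations, and you do not explain how to recover the second-derivative control $\del_x^2\re^\theta\in L^2$ without them.

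\textbf{The fixed-$\e$ existence is also non-trivial and not addressed.} Saying "Galerkin or JKO" glosses over a real loss-of-derivatives problem: the right-hand side of the equation for $\ue$ contains three derivatives of $\re$, while the continuity equation loses a derivative relative to $\ue$. The paper resolves this by adding a second-order regularization $-\deps\del_x^2\ue$ (with $\deps$ exponentially small in $\e$ so it does not spoil the entropy estimate, a point that must be checked against the $e^{K/\e}$ bounds) and by rewriting the system in the skew-adjoint Euler–Korteweg variables $\A_\e=\sqrt{\kappa_\e/\re}\,\del_x\re$, $\tilde\kappa_\e=\sqrt{\re\kappa_\e}$, where the highest-order terms cancel. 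Some device of this kind is needed; your proposal does not supply one. Finally, the "truncation of the test function near $\{\rho=0\}$" step you sketch for $\beta>-2$ is not needed in the paper's argument: once the flux is rewritten as $\tfrac{1}{\theta}\rho^{\beta+2-\theta}\del_x^2\rho^\theta-\tfrac{\beta+3}{\theta^2}\rho^{\beta+2-\theta}|\del_x\rho^{\theta/2}|^2$ with $\theta=\tfrac{3\beta+5}{4}$, the exponent $\beta+2-\theta=\tfrac{\beta+3}{4}>0$ is automatically nonnegative, so the flux is integrable with no truncation; but obtaining that reformulation presupposes exactly the entropy structure your truncated $\kappa_\e$ jeopardizes.
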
 

Equation \eqref{eq:gfb} includes two well-known and well-studied particular cases. When $\beta=-1$, \eqref{eq:gfb} reduces to the {\em Quantum-Drift-Diffusion} equation 
\begin{equation}\label{eq:qdd}
\partial_t\rho+\dx\left(\rho\dx\left(\frac{\dx^2\sqrt{\rho}}{\sqrt{\rho}}\right)\right)=0,
\end{equation}
introduced in \cite{DLSS1,DLSS2} in the study of interface fluctuations in a certain spin system and in the analysis of semiconductors, \cite{JP}. 
On the other hand, when $\beta=0$, \eqref{eq:gfb} reduces to the {\em thin-film equation}
\begin{equation}\label{eq:tf}
\partial_t\rho+\dx(\rho\dx^3\rho))=0,
\end{equation}
which is a model for the capillarity-driven evolution of a viscous thin film over a solid substrate \cite{M,ODB} and can also be seen as a lubrication approximation of the Hele–Shaw flow \cite{GO01}.\\

The analysis of the Cauchy problem for the equations \eqref{eq:qdd} and \eqref{eq:tf} is well-understood. In particular, concerning the Quantum-Drift-Diffusion equation, the global-in-time existence of non-negative weak solutions has been proved in \cite{JP} for the one-dimensional case, and independently and with different methods in \cite{JM} and \cite{ST} for the multidimensional case. In particular, in \cite{JM} the analysis is based on entropy methods and time discretization, while in \cite{ST}, existence is obtained by exploiting the gradient flow structure and the Jordan-Kinderlehrer-Otto (JKO) scheme, \cite{JKO}. We also mention the result in \cite{F} where, remarkably, the uniqueness of the weak solutions constructed in \cite{JM,ST} is proved. For the thin-film equation, we refer to the seminal paper \cite{BF} for the global-in-time existence of non-negative weak solutions in one space dimension and to \cite{MDPGG} for the multi-dimensional case.\\

Several results are available regarding the Thin Film equation and the Quantum-Drift-Diffusion equation, covering different aspects and proving different properties of the solutions of \eqref{eq:qdd} and \eqref{eq:tf}. Without the intention of providing an exhaustive list, we refer to \cite{LT13, LT17, JP01, GJT06, DPGG98, O98, GO01, GKO08, DM05, Be96, MS15} and references therein.\\ 

Concerning the analysis of the Cauchy problem \eqref{eq:gfb}-\eqref{eq:gfgid}, fewer results are available. The global existence of non-negative weak solutions for $\beta\in[-1,0]$ has been proved for the first time in \cite{MMS} using the (JKO) scheme. In the one dimensional case and always in the range $\beta\in[-1,0]$, a different proof of the global existence of non-negative weak solutions based on time discretization and entropy methods is given in \cite{Bu} . Later, the range has been extended to $\beta\in [-2,1]$ in \cite{X} by using energy methods and in \cite{LMZ} by using the (JKO) scheme. It has been conjectured in \cite{MMS} that the existence of positive weak solutions in the one-dimensional case for \eqref{eq:B} holds for any $\beta>-3$, which is indeed the range considered in Theorem \ref{teo:main}.

\subsection{Strategy of the proof} 
The proof of Theorem \ref{teo:main} is based on a standard compactness argument. Formally, equation \eqref{eq:gfb} possesses a variety of entropy functionals dissipating along its solutions. The first one, related to the gradient flow structure, is given by 
\begin{equation}\label{eq:energyintro}
\begin{aligned}
\frac{d}{dt}\int \rho^\beta|\del_x\rho|^2dx + \int \rho\left|\del_x\left(\del_x(\rho^\beta\del_x\rho)-\frac{1}{2}\beta\rho^{\beta-1}|\del_x\rho|^2\right)\right|^2dx = 0.
\end{aligned}
\end{equation}
The second one is a zero order entropy, and yields for $\beta\not=-1$ the identity
\begin{equation}\label{eq:bdentropyintro1}
\begin{aligned}
\frac{d}{dt}\int\frac{4\rho^{\frac{\beta+3}{2}}}{(\beta+3)(\beta+1)}dx + \frac{4}{(\beta+1)^2}\int \rho^{\frac{\beta+3}{2}}|\del_x^2\rho^{\frac{\beta+1}{2}}|^2dx=0.
\end{aligned}
\end{equation}
Note that the dissipation term in \eqref{eq:bdentropyintro1} provides some control on the second derivative of $\rho$. In particular, by using Lemma \ref{lem:inequality}, it holds that for $\theta = \frac{3\beta+5}{4}$ there exists $C>0$ such that 
\begin{equation}\label{eq:bdentropyintro2}
\begin{aligned}
\int|\del_x^2\rho^\theta|^2dx \leq C \int \rho^{\frac{\beta+3}{2}}|\del_x^2\rho^{\frac{\beta+1}{2}}|^2dx.
\end{aligned}
\end{equation}
The above inequality generalizes the inequality 
\begin{equation}\label{eq:jm}
\begin{aligned}
\int|\del_x^2\sqrt{\rho}|^2dx \leq C \int \rho|\del_x^2\ln{\rho}|^2dx
\end{aligned}
\end{equation}
proved in \cite{JM}, and it is a particular case of a more general inequality proved in \cite[Theorem 2.1]{GLF}.  As a matter of fact, as a consequence of the result in \cite{GLF}, there is a family of zero-order functionals that get dissipated by the solutions of \eqref{eq:gfb}. In this paper we consider only \eqref{eq:bdentropyintro1}. In particular, we note that \eqref{eq:bdentropyintro1} coincides with zero-order entropy considered in \cite{MMS} only in the limiting case $\beta = -1$, where the logarithm must be considered, and not in the other cases.\\

Unfortunately, \eqref{eq:energyintro} and \eqref{eq:bdentropyintro1} are valid only for solutions that remain bounded away from zero. Consequently, it is necessary to develop an approximating scheme that permits sufficiently regular solutions, which are also bounded away from zero. This would allow us to establish appropriate approximate versions of the bounds \eqref{eq:energyintro} and \eqref{eq:bdentropyintro1}, and then pass to the limit in the approximate system to recover \eqref{eq:gfb}. Note that the a priori estimates, in particular \eqref{eq:bdentropyintro1}, are highly dependent on the structure of the equation. This makes the construction of a suitable approximate system non-trivial.\\

In order to devise the approximate scheme, we start by noticing that since we work in one space dimension, controlling the gradient of negative powers of the density helps to control the regions where $\rho$ vanishes through the Sobolev embedding. 
Thus, a first step will be to approximate $\kappa(\rho)=\rho^{\beta}$ with some $\kappa_\e(\re)$ involving some negative power of the density. In particular, we consider the following system 
\begin{equation*}
\begin{cases}
\partial_t\re+\dx(\re\ue)=0\\
\re\,\ue=\re\del_x\left(\del_x(\kappa_\e(\re)\del_x\re)-\frac{1}{2}\kappa_\e'(\re)|\del_x\re|^2\right). 
\end{cases}
\end{equation*}
Clearly, a further regularization is needed. A natural choice is adding a higher-order Laplacian in the second equation, namely
\begin{equation}\label{eq:regintro}
\begin{cases}
\partial_t\re+\dx(\re\ue)=0\\
\delta_{\e}(-\dx^2)^{s}\ue+\re\,\ue=\re\del_x\left(\del_x(\kappa_\e(\re)\del_x\re)-\frac{1}{2}\kappa_\e'(\re)|\del_x\re|^2\right),
\end{cases}
\end{equation}
with $\delta_{\e}\to 0$ as $\e\to 0$. 
At this point, we want to prove some suitable approximate version of \eqref{eq:energyintro} and \eqref{eq:bdentropyintro1}. Formally, for any fixed $\e>0$, the gradient flow estimate \eqref{eq:energyintro} is easily obtained. On the other hand, \eqref{eq:bdentropyintro1} requires some care. First, we exploit a general identity concerning the first variation of the Korteweg energy functional, see Proposition \ref{prop:equikort}, which allows to generalize the zero order entropy to the case when $\kappa(\rho)$ is not a power. This identity has already proved crucial in the analysis of compressible Navier-Stokes with degenerate viscosity and the Navier-Stokes-Korteweg equations in \cite{AS1, BN, BVY}. We also refer to \cite{ABS} for further generalization. However, in trying to obtain a suitable approximate version of \eqref{eq:bdentropyintro1}, it appears that unless we take $s=1$, the term $\delta_{\e}(-\dx^2)^{s}\ue$ seems to prevent obtaining the desired estimate. Thus, we consider 
\begin{equation}\label{eq:regintro1}
\begin{cases}
\partial_t\re+\dx(\re\ue)=0\\
-\delta_{\e}\dx^2\ue+\re\,\ue=\re\del_x\left(\del_x(\kappa_\e(\re)\del_x\re)-\frac{1}{2}\kappa_\e'(\re)|\del_x\re|^2\right).
\end{cases}
\end{equation}
We note that in \eqref{eq:regintro1} an issue of loss of derivatives appears. Indeed, the right-hand side of the elliptic equation for $\ue$ contains three derivatives in $\re$, and the continuity equation always loses derivatives with respect to the regularity of $\ue$. To solve the issue of the loss of derivatives, we use a skew-adjoint structure already known in the theory of Euler-Korteweg equations (see \cite{BGDD}). Indeed, introducing 
\begin{equation*}
\begin{aligned}
&\A_{\e}:=\sqrt{\frac{\kappa_\e}{\re}}\dx\re,& 
&\tk:=\sqrt{\re\kappa_\e},&
&\bar{\mu}(\re):=\re^{-1},& 
\end{aligned}
\end{equation*}
system \eqref{eq:regintro1} can be re-written as 
\begin{equation}\label{eq:appa2i}
\begin{cases}
\partial_t\A_{\e}+\dx(\tk\dx\ue)=-\dx(\ue\A_{\e})\\
-\deps\bar{\mu}(\re)\ddx\ue+\ue-\dx(\tilde{\kappa}(\re)\dx\A_{\e})=\frac{1}{2}\dx(\A_{\e}\A_{\e}).
\end{cases}
\end{equation}
Then the loss of derivatives disappears because the higher-order derivative terms appear in \eqref{eq:appa2i} in a skew-symmetric fashion.  We conclude noting that extending Theorem \ref{teo:main} to the entire real line is likely possible, but considering a bounded interval with appropriate boundary conditions is not trivial. Indeed, the validity of \eqref{eq:bdentropyintro1} is not clear, see \cite[Theorem 2.2]{GLF}.\\

\subsection*{Notation}
We use standard notation. In particular, the space of periodic smooth functions compactly supported on $(0,T)\times\T$ and with values on $\R$ will be denoted by $C^{\infty}_c((0,T)\times\T;\R)$. We will denote with $L^{p}(\T)$ the standard Lebesgue spaces and with $\|\cdot\|_{L^p}$ their norm. The Sobolev space of functions with $k$ distributional derivatives in $L^{p}(\T)$ is $W^{k,p}(\T)$ and in the case $p=2$ we will write $H^{k}(\T)$. The spaces $W^{-k,p}(\T)$ and $H^{-k}(\T)$ denote the dual spaces of $W^{k,p'}(\T)$ and $H^{k}(\T)$ respectively, where $p'$ is the H\"older conjugate of $p$. Given a Banach space $X$, the space $C(0,T;X_w)$ is the space of continuous functions with values in the space $X$ endowed with the weak topology. Moreover, we use the classical Bochner spaces for time-dependent functions with values in $X$, namely $L^{p}(0,T;X)$, $W^{k,p}(0,T;X)$ and $W^{-k,p}(0,T;X)$. When $X=L^p(\Omega)$, $X=H^{k}(\T)$, or $X=W^{k,p}(\T)$ the norms of the spaces $L^{q}(0,T;X)$ are denoted by $\|\cdot\|_{L^{q}_{t}(L^{p}_{x})}$, $\|\cdot\|_{L^{q}_{t}(H^{k}_{x})}$, and $\|\cdot\|_{L^{q}_{t}(W^{k,p}_{x})}$, respectively.

\subsection*{Organization of the paper}
The paper is organized as follows. In Section \ref{sec:ws} we give the precise definition of weak solutions and explain the formal {\em a priori} estimates available for equation \eqref{eq:gfb}. In Section \ref{sec:iden} we prove some important properties about the first variation of the Korteweg energy, such as Proposition \ref{prop:equikort}. Finally, in Section \ref{sec:appsys} we study the approximate system \eqref{eq:regintro1} and in Section \ref{sec:proof} we give the proof of Theorem \ref{teo:main}.  

\subsection*{Acknowledgments}
During the completion of the present paper, the first author has been fully supported by PRIN2022 ``Classical equations of compressible fluids mechanics: existence and
properties of non-classical solutions''. The second author gratefully acknowledge the partial support by the Gruppo
Na\-zio\-na\-le per l’Analisi Matematica, la Probabilit\`a e le loro
Applicazioni (GNAMPA) of the Istituto Nazionale di Alta Matematica
(INdAM), by the PRIN2022 ``Classical equations of compressible fluids mechanics: existence and properties of non-classical solutions'' and by the PRIN2022-PNRR ``Some
mathematical approaches to climate change and its impacts.'' 

\subsection*{Author Contributions} All authors contributed equally to this paper.

\subsection*{Data Availability} No datasets were generated or analyzed during the current study.


\subsection*{Conflict of interest} The authors declare no competing interests.

\section{A priori estimates and weak solutions}\label{sec:ws}

The aim of this section is to define the notion of weak solutions to equation \eqref{eq:gfb}. Recall that we assume $\beta>-3$. In order to give a meaningful definition of weak solutions, we show the derivation of the formal {\em a priori} estimates available for \eqref{eq:gfg}, namely \eqref{eq:energyintro} and \eqref{eq:bdentropyintro1}

\subsection{Energy estimate.} 
Multiplying \eqref{eq:gfg} by $-(\dx(\rho^{\beta}\ddx\rho)-\beta\frac{\rho^{\beta-1}}{2}|\dx\rho|^2)$ and integrating over $\T$, we obtain:
\begin{equation*}
\frac{d}{dt}\int \rho^{\beta}\frac{|\del_x\rho|^2}{2}\,dx+\int\rho\left|\del_x\left(\dx(\rho^{\beta}\ddx\rho)-\beta\frac{\rho^{\beta-1}}{2}|\dx\rho|^2\right)\right|^2\,dx=0.
\end{equation*}
Then, we have that 
\begin{equation}\label{eq:energy-est}
\begin{aligned}
&\frac{d}{dt}\int\frac{2|\del_x\rho^{\frac{\beta}{2}+1}|^2}{(\beta+2)^2}\,dx\leq 0,\mbox{ for }\beta\not=-2\\
&\frac{d}{dt}\int|\del_x\ln\rho|^2\,dx\leq 0,\mbox{ for }\beta=-2,
\end{aligned}
\end{equation}
which, together with the conservation of mass, Sobolev embeddings, and the restriction $\beta>-3$ (see Lemma \ref{lem:51}), implies that 
\begin{equation}\label{eq:aplinf}
\rho\in L^{\infty}(0,T;L^{\infty}(\T)). 
\end{equation}
Moreover, if $-3<\beta\leq -2$ it also holds that 
\begin{equation}\label{eq:vacest}
\frac{1}{\rho}\in L^{\infty}(0,T;L^{\infty}(\T)). 
\end{equation}
\begin{remark}
We note that for the Quantum-Drift-Diffusion equation, corresponding to $\beta=-1$, it was proved in \cite{JM06} that 
\begin{equation*}
\int|\dx^{3}\sqrt{\rho}|^2\,dx\leq C \int\rho\left|\dx\frac{\ddx\sqrt{\rho}}{\sqrt{\rho}}\right|^2\,dx,
\end{equation*}
while, for the case of the thin film equation, corresponding to $\beta=0$, it was proved in \cite{Be96} that 
\begin{equation*}
\int|\dx^3\rho^{\frac{3}{2}}|^2\,dx\leq C \int\rho|\dx^3\rho|^2\,dx
\end{equation*}
Instead, it is not clear to us whether the dissipation term
\begin{equation*}
\int\rho\left|\del_x\left(\dx(\rho^{\beta}\ddx\rho)-\beta\frac{\rho^{\beta-1}}{2}|\dx\rho|^2\right)\right|^2\,dx
\end{equation*}
provides some Sobolev control on some powers of the density without restricting the range of $\beta$. 
\end{remark}
\subsection{Entropy estimate.} 
Equation \eqref{eq:gfb} has an additional dissipating functional, which we shall call entropy. Considering only the cases $\beta\not=-1$ we note that by a direct calculation 
\begin{equation}\label{eq:idbeta}
\rho\dx\left(\dx(\rho^{\beta}\ddx\rho)-\beta\frac{\rho^{\beta-1}}{2}|\dx\rho|^2\right)=
\frac{2}{\beta+1}\dx\left(\rho^{\frac{\beta+3}{2}}\ddx\rho^{\frac{\beta+1}{2}}\right).
\end{equation}
Then, multiplying \eqref{eq:gfb} by $\frac{2}{\beta+1}\rho^{\frac{\beta+1}{2}}$ we obtain \eqref{eq:bdentropyintro1}, namely
\begin{equation}\label{eq:entropy}
    \frac{d}{dt}\int\frac{4\rho^{\frac{\beta+3}{2}}}{(\beta+3)(\beta+1)}dx + \frac{4}{(\beta+1)^2}\int \rho^{\frac{\beta+3}{2}}|\del_x^2\rho^{\frac{\beta+1}{2}}|^2dx=0.
\end{equation}
Using Lemma \ref{lem:inequality}, \eqref{eq:entropy} implies for $\beta\not=-5/3$ the {\em a priori} bounds 
\begin{equation}
\begin{aligned}\label{eq:theta}
&\rho^{\theta}\in L^{2}(0,T;H^{2}(\T))&
&\del_x\rho^{\frac{\theta}{2}}\in L^{4}((0,T)\times\T),
\end{aligned}
\end{equation}
where $\theta=\frac{3\beta+5}{4}$, and similarly for $\beta=-5/3$
\begin{equation}
\begin{aligned}\label{eq:theta1}
&\ln\rho\in L^{2}(0,T;H^{2}(\T))&
&\del_x\ln\rho\in L^{4}((0,T)\times\T).
\end{aligned}
\end{equation}
We also remark that in the limiting case $\beta=-1$ it holds that 
\begin{equation*}
\frac{d}{dt}\int\rho\ln\rho+\int\rho|\dx^2\ln\rho|^2\,dx=0.
\end{equation*}
\subsection{Definition of weak solutions.} The previous discussion motivates the following definition of weak solutions to our problem.
\begin{definition}\label{def:ws}
Let $\beta>-3$ and $\beta\not=-5/3$. A function $\rho$ is a weak solution of \eqref{eq:gfg} provided 
\begin{enumerate}
\item $\rho\geq 0$ with the regularity 
\begin{equation*}
\begin{aligned}
&\rho\in L^{\infty}(0,T;L^\infty(\T)),& &\rho^{\theta}\in L^{2}(0,T;H^{2}(\T)),& &\del_x\rho^{\frac{\theta}{2}}\in L^{4}((0,T)\times\T),&
\end{aligned}
\end{equation*}
\item for any $\psi\in C_{c}^{\infty}([0,T)\times\T)$, if $\beta\not=-\frac{5}{3}$ it holds that 
\begin{equation}\label{eq:ws}
\begin{aligned}
&-\iint\rho\partial_t\psi\,dxdt+\frac{1}{\theta}\iint\rho^{\beta+2-\theta}\del_x^2\rho^{\theta}\del_x^2 \psi\,dxdt\\
&\phantom{xx}-\frac{(\beta+3)}{\theta^2}\iint\rho^{\beta+2-\theta}|\del_x\rho^{\frac{\theta}{2}}|^2\del_x^2\psi\,dxdt -\int\rho^0\psi(0)\,dx=0,\\
\end{aligned}
\end{equation}
and if $\beta=-5/3$ it holds that 
\begin{equation}\label{eq:wsbis}
\begin{aligned}
&-\iint\rho\partial_t\psi\,dxdt+\iint\rho^{\beta+2}\del_x^2\ln\rho\,\del_x^2 \psi\,dxdt\\
&\phantom{xx}-\frac{(\beta+3)}{4}\iint\rho^{\beta+2}|\del_x\ln\rho|^2\del_x^2\psi\,dxdt -\int\rho^0\psi(0)\,dx=0,\\
\end{aligned}
\end{equation}
\item for almost every $t\in(0,T)$, if $\beta\not=-2$ it holds that 
\begin{equation*}
\int\,|\dx\rho(t,x)^\frac{\beta+2}{2}|^2\,dx\leq \int\,|\dx\rho^0(t,x)^\frac{\beta+2}{2}|^2\,dx
\end{equation*}
and if $\beta=-2$ it holds that 
\begin{equation*}
\int\,|\dx\ln\rho(t,x)|^2\,dx\leq \int\,|\dx\ln\rho^0(t,x)|^2\,dx.
\end{equation*}
\end{enumerate}
\end{definition}
\begin{remark}\mbox{}
If $\beta>-3$, then $\beta+2-\theta>0$. Thus, using \eqref{eq:aplinf} and \eqref{eq:theta}, every term in \eqref{eq:ws} is well-defined. We also note that in the range $-3<\beta\leq-5/3$, we have that $\theta\leq0$ and so \eqref{eq:vacest} holds.
\end{remark}

\section{Some preliminary remarks on the Korteweg energy}\label{sec:iden}

In this section we derive an identity that will prove useful in treating the Korteweg term. Given a smooth, strictly increasing function $\phi:(0,\infty)\mapsto \R$, we define the function $\mu:[0,\infty)\mapsto [0,\infty)$ by imposing $\mu'(\rho)=\rho\phi'(\rho)$ on $(0,\infty)$ and $\mu(0)=0$. We also define $\kappa:(0,\infty)\mapsto(0,\infty)$ as 
\begin{equation}\label{eq:mainrel}
\kappa(\rho):=\frac{(\mu'(\rho))^2}{\rho}. 
\end{equation}
The following proposition generalizes identity \eqref{eq:idbeta} and plays an important role in the construction of a suitable approximation scheme. In particular, it has been proved already in \cite{AS1} and in \cite{BN} and it can be generalized also in several dimensions.  
\begin{prop}\label{prop:equikort}
Assume \eqref{eq:mainrel} holds, then for any smooth $\rho>0$ 
\begin{equation}\label{eq:mainrelint1}
\begin{aligned}
\rho\del_x\left(\del_x(\kappa(\rho)\del_x\rho)-\frac{\kappa'(\rho)}{2}|\del_x\rho|^2\right)&=\partial_x(\rho\mu'(\rho)\partial_x^2\phi(\rho)).
\end{aligned}
\end{equation}
Moreover, 
\begin{equation}\label{eq:mainrelint}
\int\left(\del_x(\kappa(\rho)\del_x\rho)-\frac{\kappa'(\rho)}{2}|\del_x\rho|^2\right)\del_x^2\mu(\rho)\,dx=\int\rho\mu'(\rho)|\del_x^2\phi(\rho)|^2\,dx.
\end{equation}
\end{prop}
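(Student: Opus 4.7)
The plan is to establish the pointwise identity \eqref{eq:mainrelint1} first by a careful but elementary calculation, and then deduce the integrated version \eqref{eq:mainrelint} from it by two integrations by parts on $\T$, exploiting the crucial consequence $\del_x\mu(\rho)=\rho\,\del_x\phi(\rho)$ of the defining relation $\mu'(\rho)=\rho\phi'(\rho)$.

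For \eqref{eq:mainrelint1}, I would introduce the shorthand
\[
f := \del_x(\kappa(\rho)\del_x\rho) - \tfrac{1}{2}\kappa'(\rho)|\del_x\rho|^2 = \kappa(\rho)\ddx\rho + \tfrac{1}{2}\kappa'(\rho)|\del_x\rho|^2,
\]
and note the one-line product-rule identity
\[
f\,\del_x\rho = \del_x\!\Big(\tfrac{1}{2}\kappa(\rho)|\del_x\rho|^2\Big).
\]
Combined with the Leibniz formula $\rho\,\del_x f = \del_x(\rho f) - f\,\del_x\rho$, this gives $\rho\,\del_x f = \del_x\big(\rho f - \tfrac{1}{2}\kappa(\rho)|\del_x\rho|^2\big)$, which reduces \eqref{eq:mainrelint1} to the pointwise algebraic identity
\[
\rho f - \tfrac{1}{2}\kappa(\rho)|\del_x\rho|^2 = \rho\mu'(\rho)\ddx\phi(\rho).
\]
Expanding $\ddx\phi(\rho) = \phi'(\rho)\ddx\rho + \phi''(\rho)|\del_x\rho|^2$ and using $\mu'(\rho)=\rho\phi'(\rho)$ together with $(\phi'(\rho))^2 = \kappa(\rho)/\rho$, the coefficients of $\ddx\rho$ agree ($\rho\kappa(\rho)$ on both sides), while the coefficient of $|\del_x\rho|^2$ is handled by $\rho^2\phi'(\rho)\phi''(\rho) = \tfrac{1}{2}\rho^2\tfrac{d}{d\rho}(\phi'(\rho))^2 = \tfrac{1}{2}(\rho\kappa'(\rho)-\kappa(\rho))$, obtained by differentiating $\rho(\phi')^2=\kappa$.

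For \eqref{eq:mainrelint}, I would integrate by parts once to obtain $\int f\,\ddx\mu(\rho)\,dx = -\int (\del_x f)\,\del_x\mu(\rho)\,dx$. The identity $\del_x\mu(\rho) = \mu'(\rho)\del_x\rho = \rho\phi'(\rho)\del_x\rho = \rho\,\del_x\phi(\rho)$ then rewrites this as $-\int (\rho\,\del_x f)\,\del_x\phi(\rho)\,dx$. Substituting \eqref{eq:mainrelint1} for $\rho\,\del_x f$ and performing a second integration by parts on $\T$, which produces no boundary term thanks to periodicity, delivers $\int \rho\mu'(\rho)|\ddx\phi(\rho)|^2\,dx$, as claimed.

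The main, though modest, obstacle is the algebraic bookkeeping in \eqref{eq:mainrelint1}: the three interlocking relations $\mu'(\rho)=\rho\phi'(\rho)$, $\kappa(\rho)=(\mu'(\rho))^2/\rho$ and the induced $\phi'(\rho)=\sqrt{\kappa(\rho)/\rho}$ have to be invoked in the right combinations to cancel the quadratic-in-$\del_x\rho$ terms. Once \eqref{eq:mainrelint1} is in place, the dissipative identity \eqref{eq:mainrelint} essentially follows for free from $\del_x\mu(\rho)=\rho\,\del_x\phi(\rho)$ and two integrations by parts.
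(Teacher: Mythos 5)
Your proof is correct. For the integrated identity \eqref{eq:mainrelint}, your argument is essentially the same as the paper's: integrate by parts, rewrite $\partial_x\mu(\rho)=\rho\phi'(\rho)\partial_x\rho=\rho\,\partial_x\phi(\rho)$, substitute \eqref{eq:mainrelint1}, and integrate by parts again. For the pointwise identity \eqref{eq:mainrelint1}, however, your route genuinely differs from the paper's. The paper first rewrites the bracketed expression as $\mu'(\rho)\partial_x^2\phi(\rho)+\tfrac{1}{2}(\phi'(\rho))^2|\partial_x\rho|^2$, then multiplies by $\rho$, applies the Leibniz rule, and observes that the two residual terms cancel via $\mu'(\rho)\partial_x\rho=\rho\,\partial_x\phi(\rho)$. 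You instead start from $\rho\,\partial_x f=\partial_x(\rho f)-f\,\partial_x\rho$ and note the exact-derivative structure $f\,\partial_x\rho=\partial_x\bigl(\tfrac{1}{2}\kappa(\rho)|\partial_x\rho|^2\bigr)$, which collapses the whole left-hand side to a total derivative and reduces the third-order identity to the second-order algebraic relation $\rho f-\tfrac{1}{2}\kappa(\rho)|\partial_x\rho|^2=\rho\mu'(\rho)\partial_x^2\phi(\rho)$, verified by differentiating $\rho(\phi')^2=\kappa$. Your reduction is somewhat tidier in that it separates the derivative bookkeeping from the algebraic bookkeeping, and it makes transparent that both sides of \eqref{eq:mainrelint1} are exact $x$-derivatives of second-order quantities; the paper's version keeps the computation at the level of the full third-order expression but reaches the cancellation a bit more directly. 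Both are elementary and of comparable length, and both rest on the same pair of relations $\mu'=\rho\phi'$ and $\kappa=\rho(\phi')^2$.
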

\begin{proof}
We first note that from \eqref{eq:mainrel} 
\begin{equation}\label{eq:derk}
\kappa'(\rho)=\frac{2\mu'(\rho)\mu''(\rho)}{\rho}-\frac{(\mu'(\rho))^2}{\rho^2}.
\end{equation}
Using \eqref{eq:derk} and the relation $\rho\phi'(\rho)=\mu'(\rho)$ we have 
\begin{equation*}
\begin{aligned}
\rho\del_x\left(\del_x(\kappa(\rho)\del_x\rho)-\frac{\kappa'(\rho)}{2}|\del_x\rho|^2\right)
&=\rho\del_x\left(\mu'(\rho)\del_x^2\phi(\rho)+\frac{(\phi'(\rho))^2}{2}|\del_x\rho|^2\right)\\
&=\del_x(\rho\mu'(\rho)\del_x^2\phi(\rho))-\del_x\rho\mu'(\rho)\del_x^2\phi(\rho)\\
& \phantom{xx} +\rho\del_x\frac{|\del_x\phi(\rho)|^2}{2}\\
&=\del_x(\rho\mu'(\rho)\del_x^2\phi(\rho))-\rho\del_x\phi(\rho)\del_x^2\phi(\rho)\\
& \phantom{xx} +\rho\del_x\frac{|\del_x\phi(\rho)|^2}{2}=\del_x(\rho\mu'(\rho)\del_x^2\phi(\rho)).
\end{aligned}
\end{equation*}
Thus, \eqref{eq:mainrelint1} is proved. Concerning \eqref{eq:mainrelint}, integrating by parts and using \eqref{eq:mainrelint1}, we have that 
\begin{equation*}
\begin{aligned}
\int\left(\del_x(\kappa(\rho)\del_x\rho)-\frac{\kappa'(\rho)}{2}|\del_x\rho|^2\right)\del_x^2\mu(\rho)\,dx&=-\int\del_x\left[\del_x(\kappa(\rho)\del_x\rho)-\frac{\kappa'(\rho)}{2}|\del_x\rho|^2\right]\rho\phi'(\rho)\del_x\rho\,dx\\
&=-\int\partial_x(\rho\mu'(\rho)\partial_x^2\phi(\rho)) \del_x\phi(\rho)\,dx\\&=\int\rho\mu'(\rho)|\del_x^2\phi(\rho)|^2\,dx. 
\end{aligned}
\end{equation*}
\end{proof}
The following technical lemma in combination with Proposition \ref{prop:equikort} will be useful to obtain certain {\em a priori} bounds. 
\begin{lemma}\label{lem:estkor}
Assume, in addition to the previous hypotheses, that there exists $C>0$ such that 
\begin{align}
&\rho|\mu''(\rho)|\leq C\mu'(\rho).\label{eq:power2}
\end{align}
Then, there exists $\tilde{C}>0$ such that 
\begin{equation}\label{eq:kort1}
\int\rho^2(\phi'(\rho))^3\left(|\del_x^2\rho|^2+\frac{|\del_x\rho|^4}{\rho^2}\right)\,dx\leq \tilde{C}\int\rho\mu'(\rho)|\del_x^2\phi(\rho)|^2\,dx.
\end{equation}
\end{lemma}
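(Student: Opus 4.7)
The plan is to reduce the left-hand side of \eqref{eq:kort1} to a single bound on the quartic quantity $\int(\phi'(\rho))^3|\del_x\rho|^4\,dx$ and then absorb that quantity into the right-hand side via an integration by parts. First I would use the chain-rule identity $\del_x^2\phi(\rho) = \phi'(\rho)\ddx\rho + \phi''(\rho)|\del_x\rho|^2$, solved for $\ddx\rho$, together with $(a-b)^2\leq 2a^2+2b^2$, to get
\begin{equation*}
\rho^2(\phi'(\rho))^3|\ddx\rho|^2 \;\leq\; 2\rho^2\phi'(\rho)|\del_x^2\phi(\rho)|^2 + 2\rho^2\phi'(\rho)(\phi''(\rho))^2|\del_x\rho|^4.
\end{equation*}
Since $\rho\mu'(\rho)=\rho^2\phi'(\rho)$ by \eqref{eq:mainrel}, the first term on the right is already $2\rho\mu'(\rho)|\del_x^2\phi(\rho)|^2$, matching the target integrand.

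Next, I would translate the structural hypothesis \eqref{eq:power2} into pointwise control of $\rho\phi''(\rho)$. From $\mu'(\rho)=\rho\phi'(\rho)$ we get $\mu''(\rho)=\phi'(\rho)+\rho\phi''(\rho)$, and \eqref{eq:power2} together with the triangle inequality yields $|\rho\phi''(\rho)|\leq (C+1)\phi'(\rho)$, hence $\rho^2\phi'(\rho)(\phi''(\rho))^2\leq (C+1)^2(\phi'(\rho))^3$. Similarly, the second term on the left of \eqref{eq:kort1}, which equals $(\phi'(\rho))^3|\del_x\rho|^4$ after simplification, is already in that same form. So the whole proof reduces to establishing an estimate of the shape
\begin{equation*}
\int(\phi'(\rho))^3|\del_x\rho|^4\,dx \;\leq\; C' \int\rho\mu'(\rho)|\del_x^2\phi(\rho)|^2\,dx.
\end{equation*}

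The key step, and the main technical obstacle, is this last inequality: a priori the quartic first-derivative quantity on the left has the same scaling as the dissipation on the right. I would handle it via the identity $(\phi'(\rho))^3(\del_x\rho)^4 = (\del_x\phi(\rho))^3\del_x\rho$ and integration by parts, which on the torus gives
\begin{equation*}
\int (\del_x\phi(\rho))^3\,\del_x\rho\,dx \;=\; -3\int \rho\,(\del_x\phi(\rho))^2\,\del_x^2\phi(\rho)\,dx.
\end{equation*}
Factoring the right-hand integrand as $\bigl[\rho\,\phi'(\rho)^{1/2}|\del_x^2\phi(\rho)|\bigr]\cdot\bigl[\phi'(\rho)^{-1/2}|\del_x\phi(\rho)|^2\bigr]$ and applying Young's inequality $ab\leq \tfrac{\varepsilon}{2}a^2+\tfrac{1}{2\varepsilon}b^2$ produces, for one factor, $\rho^2\phi'(\rho)|\del_x^2\phi(\rho)|^2=\rho\mu'(\rho)|\del_x^2\phi(\rho)|^2$ (the target right-hand side), and for the other, $(\phi'(\rho))^{-1}|\del_x\phi(\rho)|^4=(\phi'(\rho))^3|\del_x\rho|^4$ (precisely the quantity under investigation). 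Choosing $\varepsilon$ large enough (for instance $\varepsilon=3$) makes the coefficient $3/(2\varepsilon)$ of the self-referential term strictly less than $1$, and the corresponding contribution can be absorbed into the left, yielding the bound with an explicit constant.

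Combining the three steps produces \eqref{eq:kort1} with a constant $\tilde C$ depending only on $C$. The heart of the argument is this absorption trick: the integration by parts converts a pure quartic first-derivative quantity into a genuine pairing with $\del_x^2\phi(\rho)$, and it is exactly the assumption \eqref{eq:power2} that allows the cross-term $\rho^2\phi'(\rho)(\phi''(\rho))^2|\del_x\rho|^4$ coming from the chain rule to be reabsorbed into the same quartic quantity before closure.
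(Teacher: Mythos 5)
Your proof is correct and rests on the same core mechanism as the paper's: expand the square via the chain rule, integrate by parts the resulting quartic cross-term to pair it with $\del_x^2\phi(\rho)$, absorb by Young's inequality, and invoke \eqref{eq:power2} to control the $\phi''$ contribution. The organization differs in a small but pleasant way: the paper proceeds in two nested stages through the intermediary $\del_x(\mu'(\rho)\del_x\rho)$ (first bounding $\int\frac{\mu'(\rho)}{\rho}|\del_x(\mu'(\rho)\del_x\rho)|^2 + \int\frac{(\mu'(\rho))^3|\del_x\rho|^4}{\rho^3}$ by the dissipation, then expanding $\del_x(\mu'(\rho)\del_x\rho)=\mu'\del_x^2\rho+\mu''|\del_x\rho|^2$), whereas you expand $\del_x^2\phi(\rho)=\phi'\del_x^2\rho+\phi''|\del_x\rho|^2$ once and deal directly with $Q=\int(\phi')^3|\del_x\rho|^4$. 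The integration by parts you use, $\int(\del_x\phi(\rho))^3\del_x\rho\,dx=-3\int\rho(\del_x\phi(\rho))^2\del_x^2\phi(\rho)\,dx$, is the same identity as the paper's up to a constant factor, and your pointwise bound $|\rho\phi''|\le(C+1)\phi'$ plays exactly the role of the paper's bound on $\rho|\mu''|$. So: same method, slightly more direct bookkeeping.
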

\begin{remark}\label{rem:sd}
Note that if for $k\in\R$ it holds that
\begin{equation*}
\int\rho^k\left(|\del_x^2\rho|^2+\frac{|\del_x\rho|^4}{\rho^2}\right)\,dx\leq C
\end{equation*}
then, by a direct calculation, it is easy to verify that 
\begin{equation*}
\int|\del_x^2\rho^{\theta}|^{2}+|\del_x\rho^\frac{\theta}{2}|^4\,dx\leq C
\end{equation*}
with $\theta=\frac{k+2}{2}$ if $k\not=-2$ and 
\begin{equation*}
\int|\del_x^2\ln\rho|^{2}+|\del_x\ln\rho|^4\,dx\leq C
\end{equation*}
if $k=-2$. 
\end{remark}
\begin{proof}
Using that $\rho\phi'(\rho)=\mu'(\rho)$ and expanding the square we have 
\begin{equation*}
\begin{aligned}
&\int\frac{\mu'(\rho)}{\rho}|\del_x(\mu'(\rho)\del_x\rho)|^2\,dx+\int\frac{(\mu'(\rho))^3|\del_x\rho|^4}{\rho^3}\,dx\\
=&\int\rho\mu'(\rho)|\del_x^2\phi(\rho)|^2\,dx+2\int\del_x(\mu'(\rho)\del_x\rho)\frac{\mu'(\rho)\del_x\rho}{\rho}\frac{\mu'(\rho)\del_x\rho}{\rho}\,dx\\
=&\int\rho\mu'(\rho)|\del_x^2\phi(\rho)|^2\,dx-4\int\frac{(\mu'(\rho))^2|\del_x\rho|^2}{\rho}\del_x^2\phi(\rho)\,dx,
\end{aligned}
\end{equation*}
where in the last line we integrated by parts and used again that $\rho\phi'(\rho)=\mu'(\rho)$. Then, by Young's inequality we obtain that there exists $C>0$ such that 
\begin{equation}\label{eq:ine1}
\int\frac{\mu'(\rho)}{\rho}|\del_x(\mu'(\rho)\del_x\rho)|^2\,dx+\int\frac{(\mu'(\rho))^3|\del_x\rho|^4}{\rho^3}\,dx\leq C\int\rho\mu'(\rho)|\del_x^2\phi(\rho)|^2\,dx. 
\end{equation}
Next, expanding again the square we have 
\begin{equation*}
\begin{aligned}
&\int\frac{(\mu'(\rho))^3}{\rho}|\del_x^2\rho|^2\,dx+\int\frac{\mu'(\rho)(\mu''(\rho))^2}{\rho}|\del_x\rho|^4\,dx\\
=&\int\frac{\mu'(\rho)}{\rho}|\del_x(\mu'(\rho)\del_x\rho)|^2\,dx-2\int\frac{(\mu'(\rho))^2\del^2_x\rho}{\rho}\mu''(\rho)|\del_x\rho|^2\,dx\\
\leq&\int\frac{\mu'(\rho)}{\rho}|\del_x(\mu'(\rho)\del_x\rho)|^2\,dx+2\int\frac{(\mu'(\rho))^{\frac{3}{2}}\del_x^2\rho}{\rho^{\frac{1}{2}}}
\frac{(\mu'(\rho))^{\frac{1}{2}}\rho|\mu''(\rho)||\del_x\rho|^2}{\rho^{\frac{3}{2}}}\,dx.
\end{aligned}
\end{equation*}
Using that $\rho|\mu''(\rho)|\leq C\mu'(\rho)$ and Young's inequality, we have that 
\begin{equation*}
\int\frac{(\mu'(\rho))^3}{\rho}|\del_x^2\rho|^2\,dx\leq C\int\frac{\mu'(\rho)}{\rho}|\del_x(\mu'(\rho)\del_x\rho)|^2\,dx
+C\int\int\frac{(\mu'(\rho))^3|\del_x\rho|^4}{\rho^3}\,dx. 
\end{equation*}
Then \eqref{eq:kort1} follows, after using \eqref{eq:ine1} and the relation $\rho\phi'(\rho)=\mu'(\rho)$.  
\end{proof}

\section{The approximating system}\label{sec:appsys}
In this section we study the approximating system. Let $0<\e<1$, we introduce the functions
\begin{equation*}
\begin{aligned}
&\mu_\e(\rho_\e) = \frac{2}{\beta+3} \rho_\e^{\frac{\beta+3}{2}} + \e \sqrt{\rho_\e},&
&\phie(\re)=\frac{2}{\beta+1}\re^{\frac{\beta+1}{2}}-\e\re^{-\frac{1}{2}},\\ &\Fe(\re)=\frac{4}{(\beta+1)(\beta+3)}\re^{\frac{\beta+3}{2}}-2\e\re^{\frac{1}{2}}+\frac{3}{2},& &\kappa_\e(\re)=\re^{\beta}+2\e\re^{\frac{\beta-2}{2}}+\e^2\re^{-2}.
\end{aligned}
\end{equation*}
By a direct computation, the functions above satisfy
\begin{equation*}
\begin{aligned}
&\kappa_\e(\re)=\frac{(\me'(\re))^2}{\re},& &\re\phie'(\re)=\me'(\re),& &\Fe'(\re)=\phie(\re).
\end{aligned}
\end{equation*}
To approximate equation \eqref{eq:gfb}, we consider on $(0,T)\times\T$ the system
\begin{equation}\label{eq:app}
\begin{aligned}
\del_t \rho_\e + \del_x( \rho_\e u_{\e})&= 0,\\
-\delta_{\e}\del_x^{2}u_{\e}+\re\,u_{\e}&=\del_x(\re\mu'(\re)\del_x^2\phie(\re)),
\end{aligned}
\end{equation}
where $\delta_{\e}=\e^{6}e^{-\frac{1}{2\e^2}}$, and we impose the initial datum 
\begin{equation}\label{eq:appid}
\re|_{t=0}=\re^0.
\end{equation}
The following theorem is the main result of this section. 
\begin{thm}\label{teo:exeps}
Let $k>3$ be a natural number and suppose that $\re^0\in H^{k+1}(\T)$ and assume that $\re^0(x)\geq m^0_{\e}:=\min_{x\in\T}\re^0(x)>0$. Then, 
\begin{enumerate}
\item There exists a unique global smooth solution $(\re,u_{\e})$ of the system \eqref{eq:app} such that 
\begin{equation}\label{eq:M}
(\re, u_{\e})\in C([0,T);H^{k+1}(\T))\times L^{2}(0,T;H^{k+1}(\T)).
\end{equation}
\item For any $t\in(0,T)$ 
\begin{equation}\label{eq:energy}
\begin{aligned}
\int\kappa_\e(\re(t))\frac{|\del_x\re(t)|^2}{2}\,dx&+\int_0^t\int\re|u_{\e}|^2\,dxds
+\delta_{\e}\int_0^t\int|\del_x u_{\e}|^2\,dxds\\
&=\int\kappa_\e(\re^0)\frac{|\del_x\re^0|^2}{2}\,dx.
\end{aligned}
\end{equation}
\item There exists a constant $K>0$ independent of $\e$ such that for any $t\in(0,T)$
\begin{equation}\label{eq:lvaceps}
e^{-\frac{K}{\e}}\leq \re(t,x)\leq e^{\frac{K}{\e}}. 
\end{equation}
\item There exists a constant $C>0$ independent of $\e$ such that for any $t\in(0,T)$
\begin{equation}\label{eq:bd}
\begin{aligned}
\int\,\Fe(\re(t))\,dx&+(1-\e)\iint \re(t)\me'(\re(t))|\del_x^2\phie(\re(t))|^2\,dsdx\\
&\leq \int\,\Fe(\re^0)\,dx+C\,\e
\end{aligned}
\end{equation}
\end{enumerate}
\end{thm}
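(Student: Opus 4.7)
The plan is to handle the four items $(1)$–$(4)$ in order, constructing a short-time smooth solution and then extending it globally using the a priori bounds that $(2)$–$(4)$ themselves provide.

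For the local existence and uniqueness in $(1)$, the key difficulty is that the source $\del_x(\re\mu_\e'(\re)\del_x^2\phie(\re))$ contains three derivatives of $\re$, while the continuity equation loses one derivative with respect to $u_\e$; a naive Picard iteration on \eqref{eq:app} would therefore fail. The remedy, as already indicated in the introduction, is to pass to the skew-adjoint formulation \eqref{eq:appa2i} in the unknowns $(\A_\e,u_\e)$ with $\A_\e=\sqrt{\kappa_\e(\re)/\re}\,\del_x\re$: the top-order terms $\del_x(\tilde\kappa_\e\del_x u_\e)$ and $\del_x(\tilde\kappa_\e\del_x\A_\e)$ then appear in conjugate positions, so that differentiating $k$ times and testing against the natural multiplier produces no net loss of derivatives. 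Concretely, I would run a Picard iteration in $C([0,T^*];H^{k+1}(\T))\times L^2(0,T^*;H^{k+1}(\T))$, solving at each step a linear elliptic problem for the next iterate of $u_\e$ (well-posed in $H^{k+1}$ thanks to the $-\delta_\e\bar\mu(\re)\ddx$ term and the positive lower bound on $\re$ inherited from the previous iterate) and then transporting the next $\A_\e$; $\re$ is afterwards reconstructed from $\A_\e$ and the continuity equation. Uniqueness follows from a standard Gronwall argument on the difference of two solutions, using the elliptic estimate for $u_\e$. Global existence is then obtained because the bounds in $(2)$–$(4)$ preclude blow-up for any $\e>0$.

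For $(2)$, I multiply the momentum equation by $u_\e$ and integrate. The left-hand side immediately produces $\delta_\e\|\del_x u_\e\|_{L^2}^2+\int\re\,u_\e^2\,dx$. For the Korteweg term I invoke Proposition \ref{prop:equikort}, which rewrites $\del_x(\re\mu_\e'\ddx\phie)$ as $\re\,\del_x(\del_x(\kappa_\e\del_x\re)-\tfrac{1}{2}\kappa_\e'|\del_x\re|^2)$; integrating by parts and using $\del_x(\re u_\e)=-\del_t\re$ from the continuity equation gives
\[
\int u_\e\,\del_x(\re\mu_\e'\ddx\phie)\,dx=-\frac{d}{dt}\int\kappa_\e(\re)\frac{|\del_x\re|^2}{2}\,dx,
\]
whence \eqref{eq:energy} after time integration. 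For $(3)$ I exploit the term $\e^2\re^{-2}$ inside $\kappa_\e$: since $\kappa_\e(\re)|\del_x\re|^2\geq\e^2|\del_x\ln\re|^2$, \eqref{eq:energy} controls $\|\del_x\ln\re(t)\|_{L^2}^2$ by $C/\e^2$ uniformly in $t$. Combined with the conservation of mass, which forces $\re(t,\cdot)$ to attain the value $\tfrac{1}{|\T|}\int\re^0\,dx$ at some point, the one-dimensional fundamental theorem of calculus yields $\|\ln\re(t)\|_{L^\infty}\leq K/\e$, which is \eqref{eq:lvaceps}.

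For the entropy identity $(4)$ I test the continuity equation with $\phie(\re)$; using $\Fe'=\phie$ and $\re\phi_\e'(\re)=\mu_\e'(\re)$ this gives $\frac{d}{dt}\int \Fe(\re)\,dx=\int \re u_\e\,\del_x\phie(\re)\,dx$. Replacing $\re u_\e$ by $\delta_\e\ddx u_\e+\del_x(\re\mu_\e'\ddx\phie)$ from the momentum equation and integrating by parts twice produces
\[
\frac{d}{dt}\int \Fe(\re)\,dx+\int \re\mu_\e'|\ddx\phie|^2\,dx=-\delta_\e\int\del_x u_\e\,\ddx\phie\,dx.
\]
Young's inequality with parameter $\e$ absorbs an $\e$-fraction of the dissipation into the left-hand side, leaving a residual of the form $\tfrac{\delta_\e^2}{4\e}\int|\del_x u_\e|^2/(\re\mu_\e')\,dx$. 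Using the lower bound $\re\mu_\e'(\re)\geq\tfrac{\e}{2}\sqrt{\re}\geq\tfrac{\e}{2}e^{-K/(2\e)}$ from $(3)$, the time-integrated bound $\delta_\e\|\del_x u_\e\|_{L^2_t(L^2_x)}^2\leq E_0$ from $(2)$, and the precise choice $\delta_\e=\e^6 e^{-1/(2\e^2)}$, this residual is bounded by a multiple of $\e^4\,e^{K/(2\e)-1/(2\e^2)}\,E_0$, which is $o(\e)$ and in particular $\leq C\e$; this yields \eqref{eq:bd}. The main obstacle throughout is the local existence step $(1)$, where the skew-adjoint reformulation is essential to overcome the apparent loss of derivatives; the remaining steps are algebraically clean once Proposition \ref{prop:equikort} is available, and the exponentially small parameter $\delta_\e$ is specifically tuned to beat the exponential lower bound from $(3)$ in the entropy estimate.
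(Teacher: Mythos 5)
Your derivations of items (2), (3), and (4) coincide with the paper's Steps 2, 3, and 8: multiply the momentum equation by $u_\e$ and use Proposition \ref{prop:equikort} plus the continuity equation for \eqref{eq:energy}; extract $\e^2|\partial_x\ln\re|^2\le\kappa_\e(\re)|\partial_x\re|^2$ and combine with mass conservation for \eqref{eq:lvaceps}; test the continuity equation with $\phie(\re)$ and the momentum equation with $\partial_x\phie(\re)$ for \eqref{eq:bd}, using the super-exponential smallness of $\delta_\e=\e^6e^{-1/(2\e^2)}$ to kill the factor $e^{K/(4\e)}/\sqrt{\re\mu_\e'}$ in the residual term. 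So far, so good.

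The gap is in item (1). You propose to run a Picard iteration directly on the skew-adjoint system \eqref{eq:appa2i}, solving at each step the elliptic problem for $u^{(n+1)}$ given $\A^{(n)}$, and then transporting $\A^{(n+1)}$. But this sequential solve does \emph{not} use the skew-adjoint structure, and it loses derivatives: if $\A^{(n)}\in C([0,T^*];H^{k+1})$, the elliptic operator $-\delta_\e\bar\mu(\re)\partial_x^2+1$ applied to the source $\partial_x(\tilde\kappa_\e\partial_x\A^{(n)})+\tfrac12\partial_x(\A^{(n)}\A^{(n)})\in L^2_tH^{k-1}_x$ gives $u^{(n+1)}\in L^2_tH^{k+1}_x$, fine; but then the evolution equation $\partial_t\A^{(n+1)}=-\partial_x(\tilde\kappa_\e\partial_x u^{(n+1)})-\partial_x(u^{(n+1)}\A^{(n)})$ has a right-hand side only in $L^2_tH^{k-1}_x$, so time-integration returns $\A^{(n+1)}$ in $C([0,T^*];H^{k-1})$, a loss of two derivatives per iteration. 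The skew-adjoint structure only produces cancellation when you take a simultaneous $H^k$ energy estimate on the \emph{coupled} system (as the paper does in Step 6), not when you iterate equation by equation. This is precisely why the paper introduces a second layer of regularization, $\lambda\partial_x^{2k+2}u_\e^\lambda$ in system \eqref{eq:app_reg}: that term makes the elliptic problem for $u$ gain $2k+2$ derivatives, so the Picard map genuinely closes in $C([0,T^*];H^{k})\times L^2(0,T^*;H^{k+1})$ for each fixed $\lambda>0$ (Step 1), and only afterwards is the skew-adjoint reformulation invoked to obtain bounds \emph{uniform in $\lambda$} (Step 6), which permits passing $\lambda\to0$. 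Your plan collapses these two distinct uses into one and ends up with an iteration map that does not preserve the claimed function space. To repair your argument you would either have to set up the linearized problem as a fully coupled parabolic-elliptic system with frozen coefficients and derive existence from a skew-adjoint energy estimate (a more delicate construction than a naive Picard scheme, and one you do not describe), or else insert the $\lambda$-regularization as the paper does. A secondary, smaller issue is that you treat $(\A_\e,u_\e)$ as the primary unknowns and propose to "reconstruct" $\re$ afterwards, but the coefficients $\tilde\kappa_\e(\re)$ and $\bar\mu(\re)$ in \eqref{eq:appa2i} already depend on $\re$, so $\re$ must be carried along simultaneously via the continuity equation; the paper never discards the original unknown $\re$.
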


\begin{proof} \mbox{}
\bigskip

We divide the proof into several steps. The notation $\lesssim$ means that the inequality is true up to a multiplicative constant.
\bigskip

\noindent\emph{Step 1: Local existence.}\\

 Let $k>3$ be a natural number. Without loss of generality, we can assume that $k$ is even. We start by regularizing the second equation of \eqref{eq:app} with a higher-order derivative. For $\lambda>0$ we consider 
\begin{equation}\label{eq:app_reg}
\begin{aligned}
\del_t \rho^{\lambda}_\e + \del_x( \rho^{\lambda}_\e u_{\e}^{\lambda})&= 0,\\
\lambda \del_x^{2k+2}\ue^{\lambda}-\delta_{\e}\del_x^{2}\ue^{\lambda}+\re^{\lambda}\,u_{\e}^{\lambda}&=\del_x(\re\mu'(\re^{\lambda})\del_x^2\phie(\re^{\lambda})),
\end{aligned}
\end{equation}
with initial datum \eqref{eq:appid}. We prove a local in time existence theorem of smooth solutions of \eqref{eq:app_reg} by means of a fixed point argument. To avoid heavy notation, we drop the indices $\epsilon$ and $\lambda$ for $\re^{\lambda}$ and $\ue^{\lambda}$. First, we consider the problem
\begin{equation}\label{eq:cont}
\begin{aligned}
& \del_t \rho + \del_x( \rho v) = 0, \\
& \rho|_{t=0} = \rho_0 \in H^{k+1}(\T)
\end{aligned}
\end{equation}
where $v \in B_1$, the unit ball in $L^2(0,T;H^{k+1}(\T))$.  By standard arguments, \eqref{eq:cont} admits a unique solution $\rho \in C([0,\infty), H^k(\T))$, which satisfies the bounds $m_0 e^{-T} \leq \rho(t, x) \leq M_0 e^T$, where $m_0 = \min \rho_0$ and $M_0 = \max \rho_0$. Note that since the continuity equation loses one derivative with respect to the regularity of $u$, we only obtain the $H^{k}$ spatial regularity for $\rho$. By employing the Kato-Ponce commutator estimate, see \cite{KP}, we can obtain the higher order estimate
\begin{equation}\label{eq:hkrhol}
\begin{split}
    \sup_t \|\rho\|_{H^{k}_x}^2 & \leq \|\rho_0\|_{H^{k}_x}^2 + C \int_0^T \|\rho\|_{H^{k}_x}^2\|v\|_{H^{k+1}_{x}}dt \\
    & \leq \|\rho_0\|_{H^{k}_x}^2 + CT^{1/2} \sup_t \|\rho\|_{H^{k}_x}^2.
\end{split}
\end{equation}
Choosing $T>0$ small enough, such that $CT^{1/2} \leq 1/2$, we find

\begin{equation} \label{eq:est}
    \sup_t \|\rho\|_{H^{k}_x}^2 \leq 2\|\rho_0\|_{H^{k}_x}^2.
\end{equation} 
Next, we consider the second equation of \eqref{eq:app_reg}, where $\rho$ is the one obtained after solving \eqref{eq:cont}. This is also a linear problem and can be solved by standard methods. Multiplying by $u$ and integrating by parts gives
\[ \lambda \int_0^T \| \del_x^{k+1} u \|_{L^{2}_x}^2 dt + \deps \int_0^T \|\del_xu\|_{L^{2}_x}^2 dt + \int_0^T \int_\T \rho|u|^2 dxdt \leq \int_0^T\int_T \rho \mu'(\rho)|\del_x^2\phi(\rho)||\del_xu| dxdt \]
and the right-hand side can be estimated as
\[ \begin{aligned}
    \int_0^T\int_T \rho \mu'(\rho)|\del_x^2\phi(\rho)||\del_xu| dxdt & \leq \int_0^T \|\rho \mu'(\rho)\del_x^2\phi(\rho)\|_{L^{2}_x} \|\del_xu\|_{L^{2}_x} dt \\
    & \leq C(\|\rho\|_{L^{\infty}_{x}}, \|\rho^{-1}\|_{L^{\infty}_{x}})\int_0^T \|\rho\|_{H^2_x} \|\del_xu\|_{L^{2}_x} dt \\
    & \leq C \int_0^T \|\rho\|_{H^{k}_x} \|u\|_{H^{k+1}_{x}} dt \\
    & \leq \frac{C}{\lambda}T \sup_t\|\rho\|^2_{H^k_x} + \frac{\lambda}{2}\int_0^T \|u\|_{H^{k+1}_{x}}^2dt.
\end{aligned} \]
By using \eqref{eq:est} and choosing $T=T(\lambda,\rho_0)$ small enough, we ensure that for $v \in B_1$, $u$ lies in $B_1$ as well. In other words, the mapping $v \mapsto u=S[v]$ from $L^2(0,T;H^{k+1}(\T))$ to $L^2(0,T;H^{k+1}(\T))$ satisfies $S[B_1] \subset B_1$. It remains to show that $S$ is a contraction. Let $v_1, v_2 \in B_1$. Similarly to the estimate above
\[ \begin{split}
    \sup_t\|\rho_1 - \rho_2\|_{H^{k}_x}^2 & \leq \sup_t \|\rho_1 - \rho_2\|_{H^{k}_x}^2 \int_0^T \|v_1\|_{H^{k+1}_{x}} dt + \sup_t\|\rho_2\|_{H^{k}_x} \int_0^T \|v_1 - v_2\|_{H^{k+1}_{x}} dt \\
    & \phantom{xx} + \sup_t\|\rho_1\|_{H^{k}_x} \int_0^T \|v_1 - v_2\|_{H^{k+1}_{x}} dt + \sup_t \|\rho_1 - \rho_2\|_{H^{k}_x}^2 \int_0^T \|v_2\|_{H^{k+1}_{x}} dt \\
    & \leq CT^{1/2} \sup_t \|\rho_1 - \rho_2\|_{H^{k}_x}^2 + C \|\rho_0\|_{H^{k}_x}^2 T^{1/2} \|v_1-v_2\|_{L^2_t(H^{k+1}_{x})}.
\end{split} \]
and choosing $T$ such that $CT^{1/2} \leq 1/2$, we obtain
\begin{equation}\label{aux}
    \sup_t\|\rho_1 - \rho_2\|_{H^{k}_x}^2 \leq CT^{1/2} \|v_1-v_2\|_{L_t^2(H^{k+1}_{x})}
\end{equation}
Now, given $\rho_1, \rho_2$, we consider the equation for the difference of the velocities, which gives after multiplying by $u_1-u_2$ and integrating over $(0,T)\times\T$:
\[ \begin{split}
    \lambda \int_0^T \|u_1 - u_2\|_{H^{k+1}_{x}}^2 dt + \int_0^T\int_\T \rho_1|u_1-u_2|^2dxdt & \leq C \int_0^T\|\rho_1-\rho_2\|_{H^2_x} \|u_1-u_2\|_{H^1_x}dt \\
    & \phantom{xx} + \int_0^T\int_\T(\rho_1-\rho_2)u_2(u_1-u_2)dxdt:= I + II
\end{split} \]
and the constant $C>0$ depends on the $L^\infty$ norms of $\rho_1, \rho_2, \rho_1^{-1}, \rho_2^{-1}$ and on $\|\rho_0\|_{H^k_x}$. We start by estimating $II$ as follows:
\[ \begin{split}
    II & \leq \int_0^T\int_\T |\rho_1-\rho_2||u_2||u_1-u_2| dxdt \\
    & \leq e^{\frac{CT}{2}} \int_0^T\int_\T |\rho_1-\rho_2||u_2|\rho_1^{1/2}|u_1-u_2|dxdt \\
    & \leq e^{CT} \int_0^T \|\rho_1-\rho_2\|_{L^{\infty}_{x}} \|u_2\|_{L^{2}_x} \|\rho_1^{1/2}(u_1-u_2)\|_{L^{2}_x}^2 dt \\
    & \leq C \sup_t\|\rho_1-\rho_2\|_{H^{k}_x}^2 \int_0^T \|u_2\|_{H^{k+1}_{x}}^2dt + \frac{1}{2} \int_0^T\int_\T \rho_1|u_1-u_2|^2dxdt
\end{split} \]
and using \eqref{aux} and the fact that $u_2 \in B_1$, we find
\[ II \leq CT \|v_1-v_2\|_{L^2(H^{k+1}_{x})}^2 + \frac{1}{2} \int_0^T\int_\T \rho_1|u_1-u_2|^2dxdt \]
Concerning $I$, we have:
\[ \begin{split}
    I & \leq CT^{1/2} \sup_t\|\rho_1 - \rho_2\|_{H^{k}_x} \|u_1-u_2\|_{L^2_t(H^{k+1}_{x})} \\
    & \leq CT \|v_1-v_2\|_{L^2_t(H^{k+1}_{x})}\|u_1-u_2\|_{L^2_t(H^{k+1}_{x})}
\end{split} \]
and putting everything together, we arrive at
\[ \|u_1-u_2\|_{L^2_t(H^{k+1}_{x})} \leq C(\lambda,\|\rho_0\|_{H^{k}_x}) T \|v_1-v_2\|_{L^2_t(H^{k+1}_{x})} \]

\noindent and choosing $T$ small enough, we conclude that $S$ is a contraction. An application of Banach's fixed point theorem, gives the existence of a unique strong solution to \eqref{eq:app_reg} for a small time. Precisely, going back to the notation $(\re^{\lambda}, \ue^{\lambda})$, we have proved that given $\lambda>0$ and $\e>0$, there exists a time $T^{*}_{\lambda,\e}$ and a pair 
\begin{equation}\label{eq:41}
(\re^{\lambda}, u^{\lambda}_{\e})\in C([0,T_{\lambda,\e}^{*});H^{k}(\T))\times L^2(0,T_{\lambda,\e}^{*};H^{k+1}(\T))
\end{equation}
solving \eqref{eq:app} in the classical sense.\\

\noindent\emph{Step 2: Energy estimate.}\\

Multiplying by $-\left(\dx(\kappa_\e(\re^{\lambda})\dx\re^{\lambda})-\frac{\kappa_\e'(\re^{\lambda})}{2}|\dx\re^{\lambda}|^2\right)$ the first equation in \eqref{eq:app_reg} and integrating over $\T$, we obtain
\begin{equation*}
\begin{aligned}
&\int\partial_t\re^{\lambda}\left(\partial_x(\kappa_\e(\re^{\lambda})\dx\re^{\lambda})-\frac{\kappa_\e'(\re^{\lambda})}{2}|\dx\re^{\lambda}|^2\right)\,dx\\
 = &\int\ue^{\lambda}\re^{\lambda}\dx\left(\partial_x(\kappa_\e(\re^{\lambda})\dx\re^{\lambda})-\frac{\kappa_\e'(\re^{\lambda})}{2}|\dx\re^{\lambda}|^2\right)\,dx.
\end{aligned}
\end{equation*}
Manipulating the first term, we get 
\begin{equation}\label{eq:42}
\frac{d}{dt}\int\kappa_\e(\re^{\lambda})\frac{|\dx\re^{\lambda}|^2}{2}\,dx+\int\ue^{\lambda}\re^{\lambda}\dx\left(\partial_x(\kappa_\e(\re^{\lambda})\dx\re^{\lambda})-\frac{\kappa_\e'(\re^{\lambda})}{2}|\dx\re^{\lambda}|^2\right)\,dx=0.
\end{equation}
On the other hand, multiplying the second equation by $\ue$ we obtain 
\begin{equation}\label{eq:43}
\begin{aligned}
\lambda&\int|\del_x^{k+1}u^{\lambda}_\e|^2dx+\deps\int|\dx\ue^{\lambda}|^2\,dx+\int\re^{\lambda}|u^{\lambda}_\e|^2\,dx\\
&-\int\ue^{\lambda}\re^{\lambda}\dx\left(\partial_x(\kappa_\e(\re^{\lambda})\dx\re^{\lambda})-\frac{\kappa_\e'(\re^{\lambda})}{2}|\dx\re^{\lambda}|^2\right)\,dx=0. 
\end{aligned}
\end{equation}
Then summing up \eqref{eq:42} and \eqref{eq:43} and integrating in time, we obtain for any $t\in[0,T^{*}_{\lambda,\e})$

\begin{equation} \label{eq:43-lambda}
    \begin{aligned}
    \int\kappa_\e(\re^{\lambda}(t))\frac{|\del_x\re^{\lambda}(t)|^2}{2}\,dx&+\int_0^t\int\re^{\lambda}|u^{\lambda}_{\e}|^2\,dxds+\delta_{\e}\int_0^t\int|\del_x u^{\lambda}_{\e}|^2\,dxds\\
    &+\lambda\int_0^t\int|\del_x^{k+1} u^{\lambda}_{\e}|^2\,dxds =\int\kappa_\e(\re^0)\frac{|\del_x\re^0|^2}{2}\,dx.
\end{aligned}
\end{equation}

\noindent\emph{Step 3: $L^\infty$-bounds.}\\

By the definition of $\kappa_\e$ and \eqref{eq:43-lambda} we deduce that there exists $K>0$ independent of $\e$ and $\lambda$ such that 
\begin{equation}\label{eq:lne}
\sup_{t\in[0,T^{*}_{\lambda,\e})}\e^2\int|\dx\ln\re^{\lambda}|^2\,dx\leq K^2.
\end{equation}
Since $\re$ has unit average, we also have that 
\begin{equation*}
|\ln\re(t,x)|\leq \int|\dx\ln\re^{\lambda}|\,dx,
\end{equation*}
and thus 
\begin{equation*}
\e|\ln(\re^{\lambda}(t,x))|\leq \left(\e^2\int|\dx\ln\re^{\lambda}|^2\,dx\right)^{\frac{1}{2}}\leq K. 
\end{equation*}
Hence, for any $(t,x)\in t\in[0,T^{*}_{\lambda,\e})\times\T$
\begin{equation}\label{bounds}
e^{-\frac{K}{\e}}\leq \re^{\lambda}(t,x)\leq e^{\frac{K}{\e}}.
\end{equation}

\noindent\emph{Step 4: Uniqueness for fixed $\lambda$.}\\

Let $(\re^{\lambda}, \ue^{\lambda})=(\rho, u)$ and $(\bar{\re}^{\lambda},\bar{\ue}^{\lambda})=(\bar{\rho}, \bar{u})$ be two solutions of \eqref{eq:app_reg} with the same initial datum. Then, a standard estimate on the difference of solutions of the continuity equation gives 
\begin{equation*}
\frac{d}{dt}\|\rho-\bar{	\rho}\|_{H^1_x}^2\lesssim \|\dx\bar{u}\|_{L^{\infty}_x}\|\rho-\bar{\rho}\|_{H^1_x}^2+\|u-\bar{u}\|_{H^{2}_x}\|\rho-\bar{\rho}\|_{H^1_x}. 
\end{equation*}
Recalling that by a direct calculation, using \eqref{eq:mainrel}, we have that 
\begin{equation*}
\dx(\rho\mu'(\rho)\dx^2\phi(\rho))=\dx\mathcal{K}(\rho,\dx\rho), 
\end{equation*}
with 
\begin{equation*}
\mathcal{K}(\rho,\dx\rho)=\rho\dx(\kappa(\rho)\dx\rho)-\frac{1}{2}\left(\kappa(\rho)|\dx\rho|^2-\rho\kappa’(\rho)\right)|\dx\rho|^2,
\end{equation*}
 by the second equation in \eqref{eq:app_reg} we obtain that 
\begin{equation*}
\begin{aligned}
\lambda\|\dxkpo(u-\bar{u})\|_{L^{2}_x}^2+\|u-\bar{u}\|_{L^{2}_x}^2&\lesssim \left|\int \mathcal{K}(\rho,\dx\rho)-\mathcal{K}(\bar{\rho},\dx\bar{\rho})(\dx u-\dx\bar{u})\,dx\right|\\
&+\int|\rho-\bar{\rho}||\bar{u}||u-\bar{u}|\,dx. 
\end{aligned}
\end{equation*}
Thus, by using \eqref{eq:41} and \eqref{bounds}, integrating by parts the first term, we deduce that 
\begin{equation*}
\frac{d}{dt}\|\rho-\bar{\rho}\|_{H^1_x}^2+\lambda\|\dxkpo(u-\bar{u})\|_{L^{2}_x}^2+\|u-\bar{u}\|_{L^{2}_x}^2\lesssim \|\rho-\bar{\rho}\|_{H^1_x}\|u-\bar{u}\|_{H^{2}_x}. 
\end{equation*}
By using Young's inequality and Gr\"onwall's Lemma we conclude.\\ 

\noindent\emph{Step 5: Global existence for fixed $\lambda$.}\\

We note that by \eqref{eq:43-lambda}, \eqref{bounds}, and the very same argument we used in order to prove \eqref{eq:hkrhol}, we can deduce that for any $t\in [0,T^{*}_{\lambda})$ there exists a constant $C$ depending on $\lambda$ and $\e$, but independent of $T^{*}_{\lambda,\e}$ such that 
\begin{equation}\label{eq:final C}
\sup_{t\in[0,T^{*}_{\lambda})}\|\re^{\lambda}\|_{H^{k}_x}^2+\int\|\ue^{\lambda}\|_{H^{k+1}_{x}}^2\,dt\leq C.
\end{equation}
Therefore, the solution $(\re^{\lambda},\ue^{\lambda})$ can be extended to the space $C([0,T);H^{k}(\T))\times L^2(0,T;H^{k+1}(\T))$, for any $T>0$.\\

\noindent\emph{Step 6: Estimates uniform in ${\lambda}$.}\\

As remarked in {\em Step 5}, the constant $C>0$ depends on $\lambda$. The objective of this step is to prove a $\lambda$-independent estimate in the space $C([0,T);H^{k}(\T))\times L^2(0,T;H^{k+1}(\T))$. Because of the higher-order regularization, we are only able to obtain this estimate for a small time $T=T^{*}>0$. As in {\em Step 1} to avoid heavy notation we omit the indices $\e$ and $\lambda$ for the pair $(\re^{\lambda},\ue^{\lambda})$. As explained in the Introduction, it is convenient to introduce the following quantities 
\begin{equation}\label{eq:changevar}
\begin{aligned}
&\A:=\sqrt{\frac{\kappa_\e(\rho)}{\rho}}\dx\rho,&
&\tk:=\sqrt{\rho\kappa_\e(\rho)},& &\bm:=\rho^{-1}.&\\
\end{aligned}
\end{equation}
Then, manipulating \eqref{eq:app} we deduce
\begin{equation}\label{eq:appa}
\begin{cases}
\partial_t\A+\dx(\tk\dx u)=-\dx(u\A)\\
\lambda\bar{\mu}_\varepsilon(\rho)\del_x^{2k+2} u-\deps\bm\ddx u+ u-\dx(\tk\dx\A)=\frac{1}{2}\dx(\A\A). 
\end{cases}
\end{equation}
Using \eqref{eq:43-lambda} and \eqref{bounds}, which because of {\em Step 4} hold for any $T>0$, we can infer that for any fixed $0<\e<1$ and uniformly in $\lambda$, holds that 
\begin{equation}\label{eq:412}
\begin{aligned}
&\A\in L^{\infty}(0,T;L^{2}(\T)),\,  u\in L^{2}(0,T;H^{1}(\T))\\
&\rho, \rho^{-1}\in L^{\infty}(0,T;L^{\infty}(\T)),\, \sqrt{\lambda}\del_x^{k+1} u \in L^2(0,T;L^2(\T)). 
\end{aligned}
\end{equation}
The goal is to derive an $H^{k}$ estimate that is independent of $\lambda$. By applying $\dx^{k}$ to both equations of \eqref{eq:appa} and multiplying the resulting equations by $\dxk \A$ and $\dxk u$, respectively, after integrating in space we obtain 
\begin{equation*}
\begin{aligned}
\frac{d}{dt}\frac{\|\dxk \A\|_{L^{2}_x}^{2}}{2}&+\|\dxk u\|_{L^{2}_x}^2-\lambda(\dxk(\bar{\mu}(\rho)\dx^{2k+2}u), \dxk u)-(\dxkpo(\tk\dx\A), \dxk u)\\
&+(\dxkpo(\tk\dx u), \dxk \A)-\deps(\dxk (\bm\dx^2 u), \dxk u)\\
&=-(\dxkpo(u\A), \dxk \A)+\frac{1}{2}(\dxkpo(\A\A), \dxk u). 
\end{aligned}
\end{equation*}
Integrating by parts and recalling that $k$ is even, we have that 
\begin{equation*}
\begin{aligned}
\frac{d}{dt}&\frac{\|\dxk \A\|_{L^{2}_x}^{2}}{2}+\|\dxk u\|_{L^{2}_x}^2+\deps(\bm\dxkpo u, \dxkpo u)+\lambda(\bar{\mu}(\rho)\dxkpo u,\dxkpo u)\\
&=\frac{1}{2}\lambda(\dx^2\bm\dx^{2k} u,\dx^{2k} u)-\deps([\dxk,\bm]\dx u, \dxkpo u)\\
&-\deps(\dxk(g( \rho)\A\dx u), \dxk u)-(\dx\tk\dxkpo u, \dxk \A)\\
&-([\dxkpo, \tk]\dx u, \dxk\A)-([\dxk,\tk]\dx\A, \dxkpo u)\\
&-(u\dx\dxk \A, \dxk \A)-([\dxk, u]\dx\A, \dxk \A)\\
&-(\dxk(\dx u\A), \dxk \A)-\frac{1}{2}(\dxk(\A\A), \dxkpo u),
\end{aligned}
\end{equation*}
where $[\dxk, f]h:=\dxk(f h)- f\dxk h$ and $g(\rho):=(\bar{\mu}’(\rho)\rho^{\frac{1}{2}})/([\tilde{\kappa}(\rho)]^{\frac{1}{2}})$. Using \eqref{eq:412}, the Kato-Ponce inequality and the Kato-Ponce commutator estimate, see \cite{KP}, we have that 
\begin{equation}\label{eq:m1}
\begin{aligned}
\partial_t&\|\dxk\A\|_{L^{2}_x}^2+\|\dxk u\|_{L^{2}_x}^2+\deps\|\dxkpo u\|_{L^{2}_x}^2+\lambda\|\dx^{2k+1}u\|_{L^{2}_x}^2\lesssim \lambda\|\dx^2\bm\|_{L^{\infty}_x}\|\dx^{2k}u\|_{L^{2}_x}^2\\
&+\deps\|\dxk \bm\|_{L^{2}_x}\|\dx u\|_{L^{\infty}_x}\|\dxkpo u\|_{L^{2}_x}+\deps\|\dx\bm\|_{L^{\infty}_x}\|\dxk u\|_{L^{2}_x}\|\dxkpo u\|_{L^{2}_x}\\
&+\deps\|\dxk g(\rho)\|_{L^{2}_x}\|\A\|_{L^{\infty}_x}\|\dx u\|_{L^{\infty}_x}\|\dxk u\|_{L^{2}_x}+\deps\|g(\rho)\|_{L^{\infty}_x}\|\dxk \A\|_{L^{2}_x}\|\dx u\|_{L^{\infty}_x}\|\dxk u\|_{L^{2}_x}\\
&+\deps\|g(\rho)\|_{L^{\infty}_x}\|A\|_{L^{\infty}_x}\|\dxkpo u\|_{L^{2}_x}\|\dxk u\|_{L^{2}_x}+\|\dx\tk\|_{L^{\infty}_x}\|\dxkpo u\|_{L^{2}_x}\|\dxk \A\|_{L^{2}_x}\\
&+\|\dxkpo\tk\|_{L^{2}_x}\|\dx u\|_{L^{\infty}_x}\|\dxk\A\|_{L^{2}_x}+\|\dx\tk\|_{L^{\infty}_x}\|\dxkpo u\|_{L^{2}_x}\|\dxk\A\|_{L^{2}_x}\\
&+\|\dxk\tk\|_{L^{2}_x}\|\dx\A\|_{L^{\infty}_x}\|\dxkpo u\|_{L^{2}_x}+\|\dx\tk\|_{L^{\infty}_x}\|\dxk\A\|_{L^{2}_x}\|\dxkpo u\|_{L^{2}_x}\\
&+\|\dx u\|_{L^{\infty}_x}\|\dxk\A\|_{L^{2}_x}^2+\|\A\|_{L^{\infty}_x}\|\dxkpo u\|_{L^{2}_x}\|\dxk\A\|_{L^{2}_x}+\|\dx\A\|_{L^{\infty}_x}\|\dxk u\|_{L^{2}_x}\|\dxk\A\|_{L^{2}_x}.
\end{aligned}
\end{equation}
We note that as a consequence of \eqref{eq:412} for any function $f\in C^{\infty}((0,\infty))$ we have that 
\begin{equation}\label{eq:414}
\begin{aligned}
& \|f( \rho)\|_{L^{\infty}_x}\lese 1,\quad\|\dx f( \rho)\|_{L^{\infty}_x}\lesssim \|\A\|_{L^{\infty}_x}\\
 &\|\dx^2 f( \rho)\|_{L^{\infty}_x}\lesssim \|\A\|_{L^{\infty}_x}^2+\|\dx\A\|_{L^{\infty}_x}& \\
& \|\dxk f( \rho)\|_{L^{2}_x}\lese \| \rho\|_{H^{k}_x},\quad\|\dxkpo f( \rho)\|_{L^{2}_x}\lesssim\|\dxk \A\|_{L^{2}_x}+\|\A\|_{L^{\infty}_x}\| \rho\|_{H^{k}_x}.
\end{aligned}
\end{equation}
Precisely, the implicit constants in \eqref{eq:414} depend on $\e$, on the initial data (through the bounds in \eqref{eq:412}), but not on $\lambda$. Thus, by using \eqref{eq:414} and Young's inequality, from \eqref{eq:m1} we obtain the following inequality 
\begin{equation}\label{eq:m2}
\begin{aligned}
\partial_t&\|\dxk\A\|_{L^{2}_x}^2+\|\dxk u\|_{L^{2}_x}^2+\deps\|\dxkpo u\|_{L^{2}_x}^2+\lambda\|\dx^{2k+1}u\|_{L^{2}_x}^2\\
&\lesssim \lambda\|\dx\A\|_{L^{\infty}_x}\|\dx^{2k}u\|_{L^{2}_x}^2+\lambda\|\A\|^2_{L^{\infty}_{x}}\|\dx^{2k} u\|_{L^{2}_x}^2+\deps\|\A\|^2_{L^{\infty}_{x}}\|\dxk u\|_{L^{2}_x}^2\\
&+(1+\|\A\|^2_{L^{\infty}_{x}}+\|\dx\A\|_{L^{\infty}_x}^2+\|\dx u\|_{L^{\infty}_x}^2+\|\A\|_{L^{\infty}_x}^2\|\dx u\|_{L^{\infty}_x}^2)(\|\dxk \A\|_{L^{2}_x}^2+\|\rho\|_{H^{k}_x}^2).
\end{aligned}
\end{equation}
We treat the last three terms on the right-hand side. By using the interpolation inequality 
\begin{equation}\label{eq:iterpol}
\|u\|_{H^{s}_x}\lesssim \|u\|_{H^{\tau}_x}^{1-\theta}\|u\|_{H_x^{l}}^{\theta},\qquad \theta=\frac{s-\tau}{l-\tau}
\end{equation}
which holds for any $s\in(\tau,l)$ we obtain, using Young's inequality, that
\begin{equation*}
\begin{aligned}
\deps\|\A\|_{L^{\infty}_x}^2\|\dxk u\|_{L^{2}_x}^2&\lesssim\deps\|\A\|_{L^{\infty}_x}^2\|\dx u\|_{L^{2}_x}^{\frac{2}{k}}\|\dxkpo u\|_{L^{2}_x}^{\frac{2(k-1)}{k}}\\
&\leq C\deps\|\A\|_{L^{\infty}_x}^{2k-2}\|\dxk \A\|_{L^{2}_x}^{2}+\tilde{c}\deps\|\dxkpo u\|_{L^{2}_x}^2,\\
\lambda\|\A\|^2_{L^{\infty}_{x}}\|\dx^{2k} u\|_{L^{2}_x}^2&\lesssim\lambda \|\A\|^2_{L^{\infty}_x}\|\dxkpo u\|_{L^{2}_x}^{\frac{2}{k}}\|\dx^{2k+1}u\|_{L^{2}_x}^{\frac{2(k-1)}{k}}\\
&\leq C\lambda \|\dxk\A\|_{2}^{2k}\|\dxkpo u\|_{L^{2}_x}^2+\tilde{c}\lambda\|\dx^{2k+1} u\|_{L^{2}_x}^2,\\
\lambda\|\dx\A\|_{L^{\infty}_x}\|\dx^{2k} u\|_{L^{2}_x}^2&\lesssim\lambda \|\dx\A\|_{L^{\infty}_x}\|\dxkpo u\|_{L^{2}_x}^{\frac{2}{k}}\|\dx^{2k+1}u\|_{L^{2}_x}^{\frac{2(k-1)}{k}}\\
&\leq C\lambda \|\dxk\A\|^{k}_{2}\|\dxkpo u\|_{L^{2}_x}^2+\tilde{c}\lambda\|\dx^{2k+1} u\|_{L^{2}_x}^2\\
\end{aligned}
\end{equation*}
where $\tilde{c}$ is small.
Therefore \eqref{eq:m2} implies
\begin{equation}\label{eq:m3}
\begin{aligned}
&\partial_t\|\dxk \A\|_{L^{2}_x}^2+\|\dxk u\|_{L^{2}_x}^2+\deps\|\dxkpo u\|_{L^{2}_x}^2+\lambda\|\dx^{2k+1} u\|_{L^{2}_x}^2\\
\lesssim&(1+\|\A\|_{L^{\infty}_x}^{2k-2}+\|A\|_{L^{\infty}_x}^{2}+\|\dx u\|_{L^{\infty}_x}^2+\|\dx \A\|_{L^{\infty}_x}^2+\|\A\|_{L^{\infty}_x}^2\|\dx u\|_{L^{2}_x}^2)(\|\dxk\A\|_{L^{2}_x}^2+\|\rho\|_{H^{k}_x}^2)\\
+&\lambda\|\dxkpo u\|_{L^{2}_x}^2\|\dxk\A\|_{L^{2}_x}^k+\lambda\|\dxkpo u\|_{L^{2}_x}^2\|\dxk\A\|_{L^{2}_x}^{2k}.
\end{aligned}
\end{equation}
By using the continuity equation as in the previous steps and using that $\re$ is bounded, we can deduce that 
\begin{equation*}
\frac{d}{dt}\|\rho\|_{H^{k}_x}^2\lesssim \|\dx u\|_{L^{\infty}_x}\|\rho\|_{H^{k}_x}^2+\|\dxkpo u\|_{L^{2}_x}\|\rho\|_{H^{k}_x},
\end{equation*}
which combined with \eqref{eq:m3} gives 
\begin{equation}\label{eq:m4}
\begin{aligned}
&\partial_t(\|\dxk \A\|_{L^{2}_x}^2+\|\rho\|_{H^{k}_x}^2)+\|\dxk u\|_{L^{2}_x}^2+\deps\|\dxkpo u\|_{L^{2}_x}^2+\lambda\|\dx^{2k+1} u\|_{L^{2}_x}^2\\
\lesssim&(1+\|\A\|_{L^{\infty}_x}^{2k-2}+\|\A\|_{L^{\infty}_x}^{2}+\|\dx u\|_{L^{\infty}_x}^2+\|\dx \A\|_{L^{\infty}_x}^2+\|\A\|_{L^{\infty}_x}^2\|\dx u\|_{L^{2}_x}^2)(\|\dxk\A\|_{L^{2}_x}^2+\|\rho\|_{H^{k}_x}^2)\\
+&\lambda\|\dxkpo u\|_{L^{2}_x}^2\|\dxk\A\|_{L^{2}_x}^{2k}+\lambda\|\dxkpo u\|_{L^{2}_x}^2\|\dxk\A\|_{L^{2}_x}^{2k}. 
\end{aligned}
\end{equation}
Although inequality \eqref{eq:m4} will be important in the following steps, it is still not enough to deduce a useful bound. However, using Sobolev embeddings we can infer that there exists $q>2$ such that 
\begin{equation*}
\begin{aligned}
&\partial_t(\|\dxk \A\|_{L^{2}_x}^2+\|\rho\|_{H^{k}_x}^2)+\|\dxk u\|_{L^{2}_x}^2+\deps\|\dxkpo u\|_{L^{2}_x}^2+\lambda\|\dx^{2k+1} u\|_{L^{2}_x}^2\\
\lesssim&(1+\lambda\|\dxkpo u\|_{L^{2}_x}^2+\|\dx^2 u\|_{L^{2}_x}^2)(\|\dxk \A\|_{L^{2}_x}^2+\|\rho\|_{H^{k}_x}^2)^{q}.
\end{aligned}
\end{equation*}
Note that by using \eqref{eq:iterpol} we have $\|\dx^2 u\|_{L^{2}_x}^2\leq \|\dx u\|_{L^{2}_x}^{\frac{2(k-1)}{k}}\|\dxkpo u\|_{L^{2}_x}^{\frac{2}{k}}$. Thus, by Young's inequality we obtain, after redefining $q>2$, that
\begin{equation}\label{eq:m5}
\begin{aligned}
&\partial_t(\|\dxk \A\|_{L^{2}_x}^2+\|\re\|_{H^{k}_x}^2)+\|\dxk u\|_{L^{2}_x}^2+\deps\|\dxkpo u\|_{L^{2}_x}^2+\lambda\|\dx^{2k+1} u\|_{L^{2}_x}^2\\
\lesssim&(1+\lambda\|\dxkpo u\|_{L^{2}_x}^2+\|\dx u\|_{L^{2}_x}^2)(\|\dxk \A\|_{L^{2}_x}^2+\|\re\|_{H^{k}_x}^2)^{q}.
\end{aligned}
\end{equation}
By using \eqref{eq:412}, we deduce from \eqref{eq:m5} that there exist $T^{*}>0$ and $C>0$, both independent of $\lambda$ such that 
\begin{equation*}
\sup_{t\in (0,T^{*})}(\|\dxk \A\|_{L^{2}_x}^2+\|\rho\|_{H^{k}_x}^2)+\int_0^{T^{*}}\|\dxkpo u\|_{L^{2}_x}^2\,dt\leq C. 
\end{equation*}
Noting also that
\begin{equation*}
\|\rho\|_{H^{k+1}_{x}}\leq \|\dxk\A\|_{L^{2}_x}+\|\dxk \A\|_{L^{2}_x}\|\rho\|_{H^{k}_x}+\|\rho\|_{H^{k}_x},
\end{equation*}
we obtain that 
\begin{equation}\label{eq:ubla}
\sup_{t\in (0,T^{*})}(\|\rho\|_{H^{k+1}_{x}}^2)+\int_0^{T^{*}}\|\dxkpo u\|_{L^{2}_x}^2\,dt\leq C. 
\end{equation}
Going back to the notation $(\re^{\lambda},\ue^{\lambda})$, \eqref{eq:ubla} implies that for any fixed $\e>0$
\begin{equation*}
\begin{aligned}
&\{\re^{\lambda}\}_{\lambda}\mbox{ is bounded in }L^{\infty}(0,T^{*};H^{k+1}(\T)),\\
&\{ \ue^{\lambda}\}_{\lambda}\mbox{ is bounded in }L^{2}(0,T^{*};H^{k+1}(\T)).
\end{aligned}
\end{equation*}
Since $k>3$ we can easily take the limit $\lambda\to 0$ and infer that there exists
\begin{equation}\label{eq:regstar}
(\re, \ue)\in C(0,T^{*};H^{k+1}(\T))\times L^{2}(0,T^{*};H^{k+1}(\T)), 
\end{equation}
solving \eqref{eq:app} in the classical sense. As a consequence, manipulating \eqref{eq:app} we also obtain that 
\begin{equation}\label{eq:appa2}
\begin{cases}
\partial_t\A_{\e}+\dx(\tilde{\kappa}(\re)\dx\ue)=-\dx(\ue\A_{\e})\\
-\deps\bar{\mu}(\re)\ddx\ue+\ue-\dx(\tilde{\kappa}(\re)\dx\A_{\e})=\frac{1}{2}\dx(\A_{\e}\A_{\e})
\end{cases}
\end{equation}
with $\A_{\e}$, $\tilde{\kappa}(\re)$, and $\bar{\mu}(\re)$ defined as in \eqref{eq:changevar}. Finally, by \eqref{eq:m4} and Sobolev embeddings, $(\re, \ue)$ satisfies 
\begin{equation}\label{eq:mfinal}
\begin{aligned}
&\partial_t(\|\dxk \A_{\e}\|_{L^{2}_x}^2+\|\re\|_{H^{k}_x}^2)+\|\dxk \ue\|_{L^{2}_x}^2+\deps\|\dxkpo \ue\|_{L^{2}_x}^2\\
\lesssim&(1+\|\A_{\e}\|_{L^{\infty}_x}^{2k-2}+\|\dx \A\|_{L^{\infty}_x}^2+\|\dx \ue\|_{L^{\infty}_x}^2+\|\A_{\e}\|_{L^{\infty}_x}^2\|\dx \ue\|_{L^{2}_x}^2)(\|\dxk\A_{\e}\|_{L^{2}_x}^2+\|\re\|_{H^{k}_x}^2).
\end{aligned}
\end{equation}
and by \eqref{eq:43-lambda}, it also holds that 
\begin{equation} \label{eq:43-nolambda}
    \begin{aligned}
    \int\kappa_\e(\re(t))\frac{|\del_x\re(t)|^2}{2}\,dx&+\int_0^t\int\re|u_{\e}|^2\,dxds+\deps\int_0^t\int|\del_x u_{\e}|^2\,dxds\\
    &=\int\kappa_\e(\re^0)\frac{|\del_x\re^0|^2}{2}\,dx.
\end{aligned}
\end{equation}

\noindent\emph{Step 7: Uniqueness.}\\

We prove that $(\re,\ue)$ is unique in this class. Contrary to {\em Step 4}, we need to use the formulation \eqref{eq:appa2}. Let $(\re, \ue)=(\rho, u)$ and $(\bar{\re},\bar{\ue})=(\bar{\rho}, \bar{\ue})$ be two solutions of \eqref{eq:app} with the same initial datum. By using \eqref{eq:appa2} we obtain 
\begin{equation*}
\begin{aligned}
\frac{d}{dt}\frac{\|\A-\bar{\A}\|_{L^{2}_x}^2}{2}&+\deps(\bar{\mu}(\rho)\dx(u-\bar{u}),\dx(u-\bar{u}))+\|u-\bar{u}\|_{L^{2}_x}^2\\
&=\frac{1}{2}(\dx u(\A-\bar{\A}), \A-\bar{\A})+(\dx(u-\bar{u})\bar{\A},\A-\bar{\A})\\
&+((\tilde{\kappa}(\rho)-\tilde{\kappa}(\bar{\rho}))\dx\bar{u},\A-\bar{\A})+((u-\bar{u})\dx\A,\A-\bar{\A})\\
&-\deps((\bar{\mu}(\rho)-\bar{\mu}(\bar{\rho}))\dx^2\bar{u},u-\bar{u})-((\tilde{\kappa}(\rho)-\tilde{\kappa}(\bar{\rho}))\dx\bar{\A},u-\bar{u})\\
&-\frac{1}{2}((\A-\bar{\A})(\A+\bar{\A}),\dx(u-\bar{u})).
\end{aligned}
\end{equation*}
Recalling \eqref{bounds} we obtain 
\begin{equation}\label{eq:uni1}
\begin{aligned}
&\frac{d}{dt}\frac{\|\A-\bar{\A}\|_{L^{2}_x}^2}{2}+\|u-\bar{u}\|_{H^1_x}
\lesssim a(t)(\|\A-\bar{\A}\|_{L^{2}_x}^2+\|\rho-\bar{\rho}\|_{L^{2}_x}^2),
\end{aligned}
\end{equation}
where 
\begin{equation*}
a(t):=(\|\dx u\|_{L^{\infty}_x}+\|\dx \bar{u}\|_{L^{\infty}_x}+\|\dx \bar{\A}\|^2_{L^{\infty}_{x}}+\|\dx \bar{u}\|^2_{L^{\infty}_{x}}+\|\A\|_{L^{\infty}_x}+\|\bar{A}\|_{L^{\infty}_x}). 
\end{equation*}
In addition, since 
\begin{equation}\label{eq:uni2}
\frac{d}{dt}\|\rho-\bar{\rho}\|_{L^{2}_x}^2\lesssim \|\rho-\bar{\rho}\|_{L^{2}_x}^2\|\dx u-\dx\bar{u}\|_{L^{2}_x}+\|\dx\bar{u}\|_{L^{\infty}_x}\|\rho-\bar{\rho}\|_{L^{2}_x}^2, 
\end{equation}
by adding \eqref{eq:uni1} and \eqref{eq:uni2}, and using Young's inequality and Sobolev embedding we obtain 
\begin{equation*}
\frac{d}{dt}\left(\|\A-\bar{\A}\|_{L^{2}_x}^2+\|\rho-\bar{\rho}\|_{L^{2}_x}^2\right)\lesssim (1+\|\rho\|_{H^2_x}^2+\|u\|_{H^{2}_x}^2)(\|\A-\bar{\A}\|_{L^{2}_x}^2+\|\rho-\bar{\rho}\|_{L^{2}_x}^2). 
\end{equation*}
Recalling that $(\rho, u)$ and $(\bar{\rho},\bar{u})$ have the same initial datum, by Gr\"onwall's lemma and \eqref{eq:regstar} we can conclude the uniqueness. \\

\noindent\emph{Step 8: Entropy estimate.}\\

Now that we sent $\lambda$ to zero, we can prove \eqref{eq:bd}. We multiply the first equation in \eqref{eq:app} by $\phie(\re)$ and the second equation by $\dx\phie(\re)$ and obtain
\begin{align}
&\int\partial_t\re\phie(\re)\,dx-\int\re\ue\dx\phie(\re)\,dx=0\label{eq:47}\\
&\int\re\ue\dx\phie(\re)\,dx+\int\re\me'(\re)|\ddx\phie(\re)|^2\,dx
=-\deps\int\ddx\ue\dx\phie(\re)\,dx.\label{eq:48} 
\end{align}
Summing up \eqref{eq:47} and \eqref{eq:48} we have 
\begin{equation}\label{eq:49}
\begin{aligned}
\int\Fe(\re)\,dx&+\int_0^t\int\re\me'(\re)|\ddx\phie(\re)|^2\,dx\,ds
=\int\Fe(\re^0)\,dx\\
&+\deps\int_0^t\int\dx\ue\ddx\phie(\re)\,dxds.
\end{aligned}
\end{equation}
We estimate the right-hand side of \eqref{eq:49}. By using \eqref{eq:lvaceps} and the choice of $\deps$, we have 
\begin{equation*}
\begin{aligned}
&\deps\int_0^t\int|\dx\ue||\ddx\phie(\re)|\,dxds
=\deps\int_0^t\int\frac{\dx\ue}{\sqrt{\re\me'(\re)}}\sqrt{\re\me'(\re)}\ddx\phie(\re)\,dxds\\
&\leq \frac{\sqrt{2}\deps^{\frac{1}{2}}e^{\frac{K}{4\e}}}{\e^2}
\left(\deps\int_0^t\int|\dx\ue|^2\,dxdt\right)^{\frac{1}{2}}\left(\e\int_0^t\int\re\me'(\re)|\ddx\phie(\re)|^2\,dxds\right)^{\frac{1}{2}}\\
&\leq C\,\e+\e\int_0^t\int\re\me'(\re)|\ddx\phie(\re)|^2\,dxds,
\end{aligned}
\end{equation*}
where in the last inequality we also used \eqref{eq:energy} and \eqref{eq:bd} follows.\\

\noindent\emph{Step 9: Extension of the solution to all times.}\\

Let $T>T^{*}$. The solution $(\re,\ue)$ can be extended up to time $T$, provided we can prove that there exists $C=C(T,\rho_0^{\e})>0$ such that 
\begin{equation}\label{eq:ext1}
\sup_{t\in(0,T)}\|\re(t)\|_{H^{k+1}_{x}}^2+\int_0^T\|\ue(t)\|_{H^{k+1}_{x}}^2\,dt\leq C. 
\end{equation}
Taking into account \eqref{eq:mfinal} and Sobolev embeddings, \eqref{eq:ext1} can be proved if we show that 
\begin{equation}\label{eq:ext2}
\begin{aligned}
&\A_{\e}\in L^{\infty}(0,T;H^{1}(\T)),\\
&\A_{\e}\in L^{2}(0,T;H^{2}(\T)),\\
&\ue\in L^{2}(0,T;H^{2}(\T)). 
\end{aligned}
\end{equation}
By using \eqref{eq:43-nolambda}, \eqref{bounds}, and \eqref{eq:49} we can infer that 
\begin{equation}\label{eq:ext3}
\begin{aligned}
&\A_{\e}\in L^{\infty}(0,T;L^2(\T))\cap L^{2}(0,T;H^{1}(\T)),\\
&\ue\in L^{2}(0,T;H^{1}(\T)),\\
&\rho,\rho^{-1}\in L^{\infty}(0,T;L^{\infty}(\T)). 
\end{aligned}
\end{equation}
To simplify the notation, we set $(\re,\ue)=(\rho,u)$. To prove \eqref{eq:ext2} we differentiate the first equation of \eqref{eq:appa2} and we multiply by $\dx\A$. Integrating on $\T$ we obtain 
\begin{equation}\label{eq:ext4}
\begin{aligned}
\frac{d}{dt}\frac{\|\dx\A\|_{L^{2}_x}^2}{2}-(\tilde{\kappa}(\rho)\dx^2 u,\dx^2 \A)&=-(u\dx^2\A,\dx\A)-2(\dx u\dx\A,\dx\A)\\
&-(\A\dx^2 u,\dx\A)-(\dx^2\tilde{\kappa}(\rho)\dx u,\dx\A)\\
&-(\dx\tilde{\kappa}(\rho)\dx^2 u,\dx\A). 
\end{aligned}
\end{equation}
Multiplying the second equation in \eqref{eq:appa2} by $-\dx^2 u$, we have 
\begin{equation}\label{eq:ext5}
\deps(\bar{\mu}(\rho)\dx^2 u,\dx^2 u)+\|\dx u\|_{L^{2}_x}^2+(\tilde{\kappa}(\rho)\dx^2\A,\dx^2 u)=-(\A\dx\A,\dx u)-(\dx\tilde{\kappa}(\rho)\dx\A,\dx^2\A). 
\end{equation}
Summing up \eqref{eq:ext4} and \eqref{eq:ext5} and using \eqref{eq:ext3}, Sobolev embeddings, and Young's inequality we obtain 
\begin{equation*}
\begin{aligned}
\frac{d}{dt}\|\dx\A\|_{L^{2}_x}^2+\|\dx^2 u\|_{L^{2}_x}^2&\lesssim\|\dx u\|_{L^{\infty}_x}\|\dx\A\|_{L^{2}_x}^2+\|\dx^2\tilde{\kappa}(\rho)\|_{L^{2}_x}^2\|\dx\A\|_{L^{2}_x}^2\\
&+\|\dx\tilde{\kappa}(\rho)\|^2_{L^{\infty}_x}\|\dx\A\|_{L^{2}_x}+\|\A\|_{L^{\infty}_x}^2\|\dx\A\|_{L^{2}_{x}}^2. 
\end{aligned}
\end{equation*}
Using again \eqref{eq:ext3} and Sobolev embeddings, we have that 
\begin{equation*}
\begin{aligned}
\|\dx^2\tilde{\kappa}(\rho)\|_{L^{2}_x}&\lesssim(1+\|\A\|_{L^{2}_x})\|\dx\A\|_{L^{2}_x}\lesssim\|\dx\A\|_{L^{2}_x},\\
\|\dx\tilde{\kappa}(\rho)\|_{L^{\infty}_x}&\lesssim \|\A\|_{L^{\infty}_x}\lesssim \|\dx\A\|_{L^{2}_x}.
\end{aligned}
\end{equation*}
Thus, by using Sobolev embeddings and Young's inequality we obtain 
\begin{equation*}
\frac{d}{dt}\|\dx\A\|_{L^{2}_x}^2+\|\dx^2 u\|_{L^{2}_x}^2\lesssim (1+\|\dx\A\|_{L^{2}_x}^2)\|\dx\A\|_{L^{2}_x}^2. 
\end{equation*}
Then, \eqref{eq:ext3} and Gr\"onwall's Lemma imply 
\begin{equation}\label{eq:freg}
\begin{aligned}
&\A\in L^{\infty}(0,T;H^{1}(\T)),\quad u\in L^{2}(0,T;H^{2}(\T)). 
\end{aligned}
\end{equation}
It remains to show that $\A\in L^{2}(0,T;H^{2}(\T))$. By using the second equation of \eqref{eq:appa2} we have that 
\begin{equation*}
-\tilde{\kappa}(\rho)\dx^2\A=\dx\tilde{\kappa}(\rho)\dx\A+\A\dx\A+
+\deps\bar{\mu}(\rho)\ddx u-u.
\end{equation*}
Thus, by using \eqref{eq:ext3} and Sobolev embedding, we have that 
\begin{equation*}
\|\dx^2\A\|_{L^{2}_x}^2\lesssim \|\dx\A\|_{2}^{4}+\|u\|_{L^{2}_x}^2+\|\dx^2 u\|_{L^{2}_{x}}^2. 
\end{equation*}
Then, by using \eqref{eq:freg} we have that $\A\in L^{2}(0,T;H^{2}(\T))$ and we can conclude. 
\end{proof}

\section{Proof of the main result}\label{sec:proof}
In this section we prove the main result of the paper. To minimize technicalities we make two additional assumptions. We assume first that $\re^0=\rho^0\in C^{\infty}(\T)$ with unit average and such that  $$\min_{x\in\T}\rho^0(x)=:m^0>0.$$
In particular, under this assumption, it holds that there exists a constant $C>0$ independent of $\e$ such that 
\begin{equation}\label{eq:51}
\int \kappa_\e(\rho^0)|\dx\rho^0|^2+\Fe(\rho^0)\,dx \leq C. 
\end{equation}
Moreover, to avoid considering the logarithm in the definition of $\phie$, $\me$, $\Fe$, and $\kappa_\e$, in addition to $\beta>-3$ we also assume $\beta\not\in\{-2, -5/3, -3/2, -1\}$. Finally, in this section the symbol $\lesssim$ means that the terms of the implicit constant in the inequalities is independent of $\e$.
\subsection{Uniform bounds}
We prove some bounds uniform in $\e$ implied by \eqref{eq:energy} and \eqref{eq:bd}. 
\begin{lemma}\label{lem:51}
There exists a constant $C>0$, independent of $\e$, such that 
\begin{align}
&\|\dx\re^{\frac{\beta+2}{2}}\|_{L^{\infty}_t(L^2_x)}\leq C,\label{eq:52}\\
&\|\re\|_{L^{\infty}_t(L^\infty_x)}\leq C\label{eq:53}.
\end{align}
Moreover, if $-3<\beta<-2$ then
\begin{equation}\label{eq:53p}
\|\re^{-1}\|_{L^{\infty}_t(L^\infty_x)}\leq C.
\end{equation}
\end{lemma}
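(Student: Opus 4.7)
The plan is to derive \eqref{eq:52} directly from the energy estimate \eqref{eq:energy}, and then to use it together with mass conservation and one-dimensional Sobolev-type arguments to obtain the pointwise bounds \eqref{eq:53} and \eqref{eq:53p}. For \eqref{eq:52}, I observe that $\kappa_\e(\re)\geq \re^\beta$ by construction, so \eqref{eq:energy} combined with the uniform initial bound \eqref{eq:51} gives $\int \re^\beta|\dx\re|^2\,dx \leq C$ uniformly in $\e$ and $t$; the chain rule identity $\re^\beta|\dx\re|^2=\frac{4}{(\beta+2)^2}|\dx\re^{(\beta+2)/2}|^2$, valid since $\beta\neq-2$, immediately yields \eqref{eq:52}.

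For the pointwise bounds, I would set $\sigma=(\beta+2)/2$ and exploit the conservation of mass. Because $\re(t,\cdot)$ is smooth (Theorem \ref{teo:exeps}) and has unit average, the intermediate value theorem provides $x_0(t)\in\T$ with $\re(t,x_0(t))=1$, hence $\re^\sigma(t,x_0(t))=1$. The fundamental theorem of calculus together with Cauchy--Schwarz and \eqref{eq:52} then gives
\[
\sup_{x\in\T}|\re^\sigma(t,x)-1|\leq\|\dx\re^\sigma(t)\|_{L^1}\leq|\T|^{1/2}\|\dx\re^\sigma(t)\|_{L^2}\leq C,
\]
uniformly in $\e$ and $t$. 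When $\beta>-2$ one has $\sigma>0$, so this reads $\re\leq(1+C)^{1/\sigma}$ and \eqref{eq:53} follows at once; when $-3<\beta<-2$ one has $\sigma<0$ and the same inequality $\re^\sigma\leq 1+C$ becomes $\re\geq(1+C)^{1/\sigma}>0$, which is exactly \eqref{eq:53p}.

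The main obstacle is \eqref{eq:53} in the remaining range $-3<\beta<-2$, where the previous argument only produces a lower bound on $\re$. Here I would bootstrap using the one-dimensional Sobolev embedding $W^{1,1}(\T)\hookrightarrow L^\infty(\T)$,
\[
\|\re(t)\|_{L^\infty}\leq C\bigl(\|\re(t)\|_{L^1}+\|\dx\re(t)\|_{L^1}\bigr)\leq C\bigl(1+\|\dx\re(t)\|_{L^1}\bigr),
\]
combined with the pointwise identity $\dx\re=\sigma^{-1}\re^{-\beta/2}\dx\re^\sigma$. Cauchy--Schwarz yields $\|\dx\re(t)\|_{L^1}\leq |\sigma|^{-1}\bigl(\int\re^{-\beta}\,dx\bigr)^{1/2}\|\dx\re^\sigma(t)\|_{L^2}$, and since $-\beta-1\in(1,2)$ is positive, mass conservation gives $\int\re^{-\beta}\,dx\leq \|\re\|_{L^\infty}^{-\beta-1}\int \re \leq \|\re\|_{L^\infty}^{-\beta-1}$. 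Setting $M(t):=\|\re(t)\|_{L^\infty}$, which is finite by smoothness, and using \eqref{eq:52}, one arrives at the algebraic inequality
\[
M(t)\leq C\bigl(1+M(t)^{(-\beta-1)/2}\bigr).
\]
Since the exponent $(-\beta-1)/2\in(1/2,1)$ is strictly less than one, the right-hand side is eventually dominated by the left as $M(t)\to\infty$, forcing a uniform upper bound on $M(t)$ and establishing \eqref{eq:53}. All constants are $\e$-independent thanks to \eqref{eq:51} and \eqref{eq:52}; the subtle point to verify is precisely this $\e$-uniformity in the bootstrap step.
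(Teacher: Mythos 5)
Your proof is correct, and the first three pieces -- the energy estimate giving \eqref{eq:52}, and the intermediate-value-theorem argument giving $\|\re^\sigma\|_{L^\infty}\leq 1+C$ with $\sigma=(\beta+2)/2$, which yields \eqref{eq:53} for $\beta>-2$ and \eqref{eq:53p} for $-3<\beta<-2$ -- match the paper's proof essentially verbatim. For the remaining case, the upper bound \eqref{eq:53} in the range $-3<\beta<-2$, you take a genuinely different route. You bound $\|\dx\re\|_{L^1}$ directly via the factorization $\dx\re=\sigma^{-1}\re^{-\beta/2}\dx\re^\sigma$, Cauchy--Schwarz, and the crude interpolation $\int\re^{-\beta}\leq\|\re\|_{L^\infty}^{-\beta-1}\int\re$, which produces the superlinear-free self-improvement inequality $M\leq C\bigl(1+M^{(-\beta-1)/2}\bigr)$ with exponent in $(1/2,1)$; this closes. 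The paper instead observes that the exponent $(\beta+3)/2$ is the one adapted to mass conservation: since $\dx\re^{(\beta+3)/2}\lesssim\re^{1/2}\dx\re^{(\beta+2)/2}$, a single Cauchy--Schwarz with $\int\re\,dx=1$ gives $\dx\re^{(\beta+3)/2}\in L^\infty_t L^1_x$ uniformly, and since $(\beta+3)/2>0$ the same IVT argument you already used immediately yields the $L^\infty$ bound on $\re$ with no bootstrap at all. Your argument is sound and all constants are indeed $\e$- and $t$-independent as you note, but the paper's choice of power sidesteps the algebraic fixed-point step entirely and is arguably cleaner; it also makes transparent why the range stops at $\beta=-3$, since $(\beta+3)/2>0$ is exactly what is needed.
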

\begin{proof}
By the definition of $\kappa_\e$, \eqref{eq:energy}, and \eqref{eq:51} we have that for some $C>0$ independent of $\e$ it holds that 
\begin{equation*}
\sup_{t}\int\re^{\beta}|\dx\re|^2\,dx\leq C,
\end{equation*}
and thus \eqref{eq:52} follows easily. Concerning \eqref{eq:53}, we first note that since the average is conserved and $\rho^0$ has unit average, we have that for any $(t,x)\in (0,T)\times\T$ 
\begin{equation*}
\re^{\frac{\beta+2}{2}}(t,x)\leq 1+\left(\int|\dx\re^{\frac{\beta+2}{2}}|^2\,dx\right)^{\frac{1}{2}}. 
\end{equation*}
Then, if $\beta>-2$ we obtain \eqref{eq:53} while if $-3<\beta<-2$ we obtain \eqref{eq:53p}. It remains only to prove \eqref{eq:53} if $-3<\beta<-2$. We have that 
\begin{equation*}
\int|\dx\re^{\frac{\beta+3}{2}}|\,dx\les\int\re^{\frac{1}{2}}|\dx\re^{\frac{\beta+2}{2}}|\,dx\les \left(\int\re\,dx\right)^{\frac{1}{2}}\left(\int|\dx\re^{\frac{\beta+2}{2}}|^2\,dx\right)^{\frac{1}{2}}.
\end{equation*}
Thus, 
\begin{equation*}
\{\dx\re^{\frac{\beta+3}{2}}\}_{\e}\mbox{ is bounded in }L^{\infty}(0,T;L^{1}(\T)), 
\end{equation*}
and we obtain \eqref{eq:53} arguing as above.
\end{proof}
In the next lemma we prove some bounds on the second derivative of certain powers of the density. 
\begin{lemma}\label{lem:52}
There exists a constant $C>0$ such that for $\theta=\frac{3\beta+5}{4}$ it holds that 
\begin{equation}\label{eq:54}
\begin{aligned}
&\|\ddx\re^\theta\|_{L^{2}_{t,x}}\leq C,&\quad&\|\dx\re^{\frac{\theta}{2}}\|_{L^{4}_{t,x}}\leq C.&
\end{aligned}
\end{equation}
Moreover, 
\begin{align}
&\e\iint|\ddx\re^{\frac{2\beta+3}{4}}|^2\,dxdt\leq C,&
&\e\iint|\dx\re^{\frac{2\beta+3}{8}}|^4\,dxdt\leq C,&\label{eq:55}\\
&\e^3\iint|\ddx\re^{-\frac{1}{4}}|^2\,dxdt\leq C,&
&\e^3\iint|\dx\re^{-\frac{1}{8}}|^4\,dxdt\leq C.&\label{eq:56}
\end{align}
\end{lemma}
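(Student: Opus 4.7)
The plan is to combine the entropy dissipation from Theorem \ref{teo:exeps}(4) with Lemma \ref{lem:estkor} applied to the triple $(\kappa_\e,\me,\phie)$, then expand the resulting weight as a sum of four non-negative terms carrying different powers of $\e$, and finally read off each claimed bound from Remark \ref{rem:sd}. The key observation is that $\re^2(\phie'(\re))^3$ splits into exactly four pieces whose exponents in $\re$ match the three values of $\theta$ appearing in the statement (plus one auxiliary exponent not needed here).

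The first step is to verify hypothesis \eqref{eq:power2}. A direct computation gives $\me'(\re)=\re^{(\beta+1)/2}+\frac{\e}{2}\re^{-1/2}$ and $\me''(\re)=\frac{\beta+1}{2}\re^{(\beta-1)/2}-\frac{\e}{4}\re^{-3/2}$, whence $\re|\me''(\re)|\le C(\beta)\,\me'(\re)$ with $C$ independent of $\e$. Combining \eqref{eq:bd} with \eqref{eq:51} yields the $\e$-uniform dissipation bound $\iint \re\,\me'(\re)|\ddx\phie(\re)|^2\,dxdt\le C$, so Lemma \ref{lem:estkor}, integrated in time, gives
\[
\iint \re^2(\phie'(\re))^3\!\left(|\ddx\re|^2+\frac{|\dx\re|^4}{\re^2}\right)dxdt\le C.
\]

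The main calculation is then the expansion. Using $\phie'(\re)=\re^{(\beta-1)/2}+\frac{\e}{2}\re^{-3/2}$, cubing and multiplying by $\re^2$ gives
\[
\re^2(\phie'(\re))^3 = \re^{(3\beta+1)/2} + \tfrac{3\e}{2}\re^{\beta-1/2} + \tfrac{3\e^2}{4}\re^{(\beta-3)/2} + \tfrac{\e^3}{8}\re^{-5/2}.
\]
All four summands are non-negative, so each integrates individually to a quantity bounded by the same $C$. Applying Remark \ref{rem:sd} with $k=(3\beta+1)/2$, $k=\beta-1/2$, and $k=-5/2$ produces $\theta=(3\beta+5)/4$, $(2\beta+3)/4$, and $-1/4$ respectively, which are precisely the exponents in \eqref{eq:54}, \eqref{eq:55}, and \eqref{eq:56}; the prefactors $1$, $\e$, $\e^3$ transfer verbatim. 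The exclusions $\beta\notin\{-5/3,-3/2\}$ built into the section ensure $k\ne -2$ in each application, avoiding the logarithmic case of Remark \ref{rem:sd}.

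I expect no serious obstacle: the structural work is already contained in Lemma \ref{lem:estkor} and in Theorem \ref{teo:exeps}(4), and what remains is careful bookkeeping of powers of $\re$ and $\e$. The only point requiring a bit of attention is confirming that the constant in Lemma \ref{lem:estkor} is itself $\e$-independent, which follows from the $\e$-uniform validity of \eqref{eq:power2} established above.
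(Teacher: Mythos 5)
Your proposal is correct and follows essentially the same route as the paper: apply the entropy dissipation \eqref{eq:bd} together with Lemma \ref{lem:estkor} and Remark \ref{rem:sd}, then use the non-negativity of the four terms in the expansion of $\re^2(\phie'(\re))^3$ to read off each bound; the paper phrases the first bound via the one-sided comparison $\phie'(\re)\ge\re^{(\beta-1)/2}$ and says the others are obtained ``in a similar way,'' which is exactly your explicit expansion. Your extra check that the constant in \eqref{eq:power2} is uniform in $\e$ is a useful detail the paper leaves implicit (note also that \eqref{eq:57} in the paper drops the factor $\tfrac12$ in the $\e$-term of $\phie'$, which you have correctly).
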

\begin{proof}
By the definition of $\phie(\re)$ we have that 
\begin{equation}\label{eq:57}
\phie'(\re)=\re^{\frac{\beta-1}{2}}+\e\re^{-\frac{3}{2}}.
\end{equation}
By using \eqref{eq:bd} and Lemma \ref{lem:estkor} we can infer that there exists a constant $C>0$ such that 
\begin{equation*}
\iint\re^{2}(\phie'(\re))^3\left(|\ddx\re|^2+\frac{|\dx\re|^4}{\re^2}\right)\,dxdt\leq C. 
\end{equation*}
Since $\phie'(\re)\geq \re^{\frac{\beta-1}{2}}$ we have that 
\begin{equation*}
\iint\re^{\frac{3\beta+1}{2}}\left(|\ddx\re|^2+\frac{|\dx\re|^4}{\re^2}\right)\,dxdt\leq C,
\end{equation*}
and thus by Remark \ref{rem:sd} we deduce \eqref{eq:54}. The bounds \eqref{eq:55} and \eqref{eq:56} are obtained in a similar way. 
\end{proof}
\subsection{Convergence lemma}
We start by showing the a.e. convergence of $\{\re\}_{\e}$
\begin{lemma}\label{lem:53}
There exists $\re\geq 0$ such that, up to a subsequence, 
\begin{equation}\label{eq:58}
\re\rightarrow \rho\mbox{ a.e. on }(0,T)\times\T. 
\end{equation}
Moreover, 
\begin{equation}\label{eq:58p}
\re\rightarrow \rho\mbox{ strongly in }L^p(0,T;L^{p}(\T))\mbox{ for any }p<\infty,
\end{equation}
and if $-3<\beta<-2$ 
\begin{equation}\label{eq:58dp}
\frac{1}{\re}\rightarrow\frac{1}{\rho}\mbox{ strongly in }L^p(0,T;L^{p}(\T))\mbox{ for any }p<\infty.
\end{equation}
\end{lemma}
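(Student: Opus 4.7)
The plan is to apply the Aubin--Lions compactness lemma to $\{\re\}_\e$. The two ingredients are a uniform spatial H\"older bound coming from Lemma \ref{lem:51}, and a uniform bound on $\partial_t\re$ in $L^2(0,T;H^{-1}(\T))$ coming from the continuity equation and the energy estimate \eqref{eq:energy}. Together these yield strong $L^2((0,T)\times\T)$ convergence of $\re$ (up to a subsequence) to some $\rho\geq 0$, from which we extract a.e. convergence. The remaining statements then follow by dominated convergence using the uniform upper bound \eqref{eq:53} (and the lower bound \eqref{eq:53p} when $-3<\beta<-2$).

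For the spatial regularity, Lemma \ref{lem:51} gives $\dx\re^{(\beta+2)/2}$ uniformly bounded in $L^\infty(0,T;L^2(\T))$, so by the one-dimensional Sobolev embedding $H^1(\T)\hookrightarrow C^{0,1/2}(\T)$ the family $\{\re^{(\beta+2)/2}\}_\e$ is uniformly bounded in $L^\infty(0,T;C^{0,1/2}(\T))$. For $-2<\beta\leq 0$ the inverse map $y\mapsto y^{2/(\beta+2)}$ is Lipschitz on $[0,\|\re\|_{L^\infty_{t,x}}]$; for $\beta>0$ it is H\"older of exponent $2/(\beta+2)$; and for $-3<\beta<-2$ the uniform upper and lower bounds \eqref{eq:53}--\eqref{eq:53p} on $\re$ ensure that $\re^{(\beta+2)/2}$ takes values in a compact subinterval of $(0,\infty)$ on which the inverse map is smooth. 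In every case the composition preserves H\"older regularity, and so $\{\re\}_\e$ is uniformly bounded in $L^\infty(0,T;C^{0,\alpha}(\T))$ for some $\alpha=\alpha(\beta)>0$.

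For the time regularity, the continuity equation yields $\partial_t\re=-\dx(\re\ue)$ in $\mathcal{D}'$. Estimating $\|\re\ue\|_{L^2_x}^2\leq \|\re\|_{L^\infty_x}\int\re|\ue|^2\,dx$ and using \eqref{eq:53} together with \eqref{eq:energy} gives $\re\ue\in L^2((0,T)\times\T)$ uniformly in $\e$, hence $\partial_t\re\in L^2(0,T;H^{-1}(\T))$ uniformly. Since $C^{0,\alpha}(\T)\hookrightarrow\hookrightarrow L^2(\T)\hookrightarrow H^{-1}(\T)$, Aubin--Lions produces a (not relabeled) subsequence with $\re\to\rho$ strongly in $L^2((0,T)\times\T)$, and a further subsequence delivers \eqref{eq:58}. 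Dominated convergence with the bound \eqref{eq:53} then upgrades this to \eqref{eq:58p}, and in the regime $-3<\beta<-2$ the uniform lower bound \eqref{eq:53p} combined with the a.e. convergence of $\re$ gives \eqref{eq:58dp} by dominated convergence applied to $\re^{-1}$.

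The main technical point is the first step: transferring the H\"older control of the power $\re^{(\beta+2)/2}$ to a H\"older control of $\re$ itself. The three subcases $-3<\beta<-2$, $-2<\beta\leq 0$, and $\beta>0$ must each be treated according to whether the exponent $2/(\beta+2)$ is negative, at least one, or strictly smaller than one; in each case one obtains a strictly positive H\"older exponent uniform in $\e$, which is all that is needed for the compact embedding into $L^2(\T)$ to make the Aubin--Lions argument go through for the entire range $\beta>-3$ (with the exclusions made at the start of this section).
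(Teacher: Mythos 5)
Your proof is correct, but it takes a genuinely different route from the paper's. You work directly with $\re$: for spatial compactness you convert the uniform $L^\infty_t(L^2_x)$ bound on $\dx\re^{(\beta+2)/2}$ (from the energy estimate and Lemma \ref{lem:51}) into a uniform $L^\infty_t(C^{0,\alpha}_x)$ bound on $\re$ itself, and for time compactness you use the continuity equation together with $\|\re\ue\|_{L^2_x}^2 \leq \|\re\|_{L^\infty_x}\int\re|\ue|^2\,dx$ and \eqref{eq:energy} to get $\partial_t\re\in L^2(0,T;H^{-1}(\T))$ uniformly in $\e$. The paper instead works with the auxiliary quantity $\re^{\theta+1}$ with $\theta=(3\beta+5)/4$: it derives an $L^2_t(H^1_x)$ bound on $\re^{\theta+1}$ from the entropy dissipation \eqref{eq:54}, and then controls $\partial_t\re^{\theta+1}$ only in $L^1(0,T;H^{-3}(\T))$, which requires substituting the momentum equation for $\re\ue$ and invoking both dissipation estimates \eqref{eq:energy} and \eqref{eq:bd}. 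Your version is more elementary: it avoids the momentum equation entirely, gives a stronger temporal bound ($L^2(H^{-1})$ vs.\ $L^1(H^{-3})$), and bypasses the auxiliary power $\theta+1$. The price is the exponent bookkeeping needed to pass from H\"older control of $\re^{(\beta+2)/2}$ to H\"older control of $\re$ across the three regimes of $\beta$, which you handle correctly, including the crucial use of the uniform positivity \eqref{eq:53p} when $-3<\beta<-2$. The concluding dominated-convergence arguments for \eqref{eq:58p} and \eqref{eq:58dp} match the paper.
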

\begin{proof}
We recall that $\theta=\frac{3\beta+5}{4}$. In particular, if $\beta>-3$ then $\theta+1>0$. Moreover, by Poincar\`e inequality we have that 
\begin{equation*}
\{\dx\re^\theta\}_{\e}\mbox{ is bounded in }L^2(0,T;L^2(\T)),
\end{equation*}
and then we easily get that 
\begin{equation}\label{eq:59}
\{\re^{\theta+1}\}_{\e}\mbox{ is bounded in }L^2(0,T;H^1(\T)).
\end{equation}
Moreover, for any $\psi\in C^{\infty}(\T)$, we have that 
\begin{equation*}
|\ddx(\re^\theta\psi)|\les |\ddx\re^\theta||\psi|+|\dx\\re^{\theta}||\dx\psi|+\re^{\theta}|\ddx\psi|.
\end{equation*}
In particular, by using \eqref{eq:53} when $\theta>0$ and \eqref{eq:53p} when $\theta<0$, we have that 
\begin{equation*}
\|\ddx(\re^{\theta}\psi\|_{L^{2}_{t,x}}\les\|\re^{\theta}\|_{L^2_t(H^2_x)}\|\psi\|_{W^{2,\infty}_{x}}.
\end{equation*}
Finally, by using \eqref{eq:energy} and \eqref{eq:bd} we get 
\begin{equation}\label{eq:510}
\begin{aligned}
\{\sqrt{\re\me'(\re)}\ddx\phie(\re)\}_{\e}&\mbox{ is bounded in }L^2(0,T;L^2(\T))\\
\{\sqrt{\deps}\dx\ue\}_{\e}&\mbox{ is bounded in }L^2(0,T;L^2(\T)).
\end{aligned}
\end{equation}
Then, 
\begin{equation*}
\begin{aligned}
\int|\langle\partial_t\re^{\theta+1},\psi\rangle\,dt
&\les\iint\sqrt{\re\me'(\re)}\sqrt{\re\me'(\re)}|\ddx\phie(\re)||\ddx(\re^{\theta}\psi)|\,dxdt\\
&\iint\deps|\dx\ue|\ddx(\re^{\theta}\psi)|\,dxdt\\
&\les\|\sqrt{\re\me'(\re)}\|_{L^{\infty}_{t,x}}\|\sqrt{\re\me'(\re)}\ddx\phie(\re)\|_{L^{2}_{t,x}}\|\re^{\theta}\|_{L^2_t(H^2_x)}\|\psi\|_{W^{2,\infty}_x}\\
&+\sqrt{\deps}\|\sqrt{\deps}\dx\ue\|_{L^{2}_{t,x}}|\re^{\theta}\|_{L^2_t(H^2_x)}\|\psi\|_{W^{2,\infty}_x}\\
&\les 1+\sqrt{\deps},
\end{aligned}
\end{equation*}
and by using \eqref{eq:59}, \eqref{eq:510}, the definition of $\me$, \eqref{eq:58}, and Sobolev embeddings we can conclude that 
\begin{equation*}
\{\partial_t\re^{\theta+1}\}_{\e}\mbox{ is bounded in }L^{1}(0,T;H^{-3}(\T)).
\end{equation*}
Therefore, by Aubin-Lions Lemma we deduce that 
\begin{equation*}
\{\re^{\theta+1}\}_{\e}\mbox{ is pre-compact in }L^{2}(0,T;L^2(\T)).
\end{equation*}
Thus, recalling that $\theta+1>0$, we can conclude the proof of \eqref{eq:58}. Finally, \eqref{eq:58p} follows by \eqref{eq:58} and \eqref{eq:53} and \eqref{eq:58dp} follows by \eqref{eq:58} and \eqref{eq:53p}. 
\end{proof}
Next, we study the convergence of some quantities involving derivatives of powers of the density. 
\begin{lemma}\label{lem:54}
It holds that 
\begin{align}
&\ddx\re^\theta\weakto\ddx\rho^\theta\mbox{ weakly in }L^2(0,T;L^2(\T)),\label{eq:511}\\
&\re^{\delta}\dx\re^{\frac{\theta}{2}}\rightarrow \rho^{\delta}\dx\rho^{\frac{\theta}{2}}\mbox{ strongly in }L^p(0,T;L^p(\T))
\mbox{ for any }p\in[1,4)\mbox{ and }\delta>0. \label{eq:512}
\end{align}
\end{lemma}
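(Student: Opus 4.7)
The plan for \eqref{eq:511} is a standard weak compactness argument. By Lemma \ref{lem:52}, $\{\ddx\re^\theta\}_\e$ is bounded in $L^2((0,T)\times\T)$, so up to a subsequence $\ddx\re^\theta \weakto \xi$ weakly in $L^2_{t,x}$. To identify $\xi$, I test against $\phi\in C^\infty_c((0,T)\times\T)$ and integrate by parts twice to get $\iint\ddx\re^\theta\,\phi\,dx\,dt=\iint\re^\theta\,\ddx\phi\,dx\,dt$. The a.e.\ convergence from Lemma \ref{lem:53} combined with the $L^\infty$ bounds \eqref{eq:53} (when $\theta>0$) or \eqref{eq:53p} (when $\theta<0$, i.e.\ in the regime $-3<\beta<-5/3$ covered by the standing hypotheses) yields $\re^\theta\to\rho^\theta$ strongly in $L^2_{t,x}$. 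Passing to the limit identifies $\xi=\ddx\rho^\theta$ distributionally, hence in $L^2$; uniqueness extends the convergence to the whole family.

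The core step for \eqref{eq:512} is to upgrade \eqref{eq:511} into \emph{strong} convergence of $\dx\re^\theta$ in $L^2_{t,x}$. Combining $\re^\theta\to\rho^\theta$ strongly in $L^2(0,T;L^2(\T))$ with the uniform $L^2(0,T;H^2(\T))$ bound from Lemma \ref{lem:52} and the Bochner-space interpolation
\[
\|\re^\theta-\rho^\theta\|_{L^2_tH^1_x}\lesssim \|\re^\theta-\rho^\theta\|_{L^2_tL^2_x}^{1/2}\|\re^\theta-\rho^\theta\|_{L^2_tH^2_x}^{1/2},
\]
one obtains $\re^\theta\to\rho^\theta$ strongly in $L^2(0,T;H^1(\T))$, and in particular $\dx\re^\theta\to\dx\rho^\theta$ strongly in $L^2_{t,x}$.

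Combined with the pointwise identity $\re^\delta\dx\re^{\theta/2}=\tfrac12\re^{\delta-\theta/2}\dx\re^\theta$, valid thanks to the strict positivity \eqref{eq:lvaceps}, this identifies the limit in \eqref{eq:512}. When $\delta-\theta/2\ge 0$, the prefactor $\re^{\delta-\theta/2}$ is uniformly bounded in $L^\infty_{t,x}$ by Lemma \ref{lem:51} and converges to $\rho^{\delta-\theta/2}$ strongly in every $L^p$ with $p<\infty$ (a.e.\ convergence plus dominated convergence); multiplying by $\dx\re^\theta$, which converges strongly in $L^2_{t,x}$ and is bounded in $L^4_{t,x}$ by Lemma \ref{lem:52}, gives strong $L^2_{t,x}$ convergence of $\re^\delta\dx\re^{\theta/2}$ to $\rho^\delta\dx\rho^{\theta/2}$. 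The opposite case $\delta-\theta/2<0$ can only occur for $\theta>0$: here I would switch to the equivalent identity $\re^\delta\dx\re^{\theta/2}=\tfrac{\theta}{\theta+2\delta}\dx(\re^{\theta/2+\delta})$ and use a truncation near the vacuum set $\{\rho=0\}$, leveraging the Stampacchia property $\dx\rho^{\theta/2+\delta}=0$ a.e.\ on $\{\rho=0\}$ together with the uniform $L^4_{t,x}$ bound on $\dx(\re^{\theta/2+\delta})$ to make the truncation error vanish.

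Finally, the uniform $L^4_{t,x}$ bound combined with the strong $L^2_{t,x}$ convergence upgrades to strong $L^p_{t,x}$ convergence for $p\in[2,4)$ through the elementary interpolation
\[
\|f_\e-f\|_{L^p_{t,x}}\le \|f_\e-f\|_{L^2_{t,x}}^{\alpha}\|f_\e-f\|_{L^4_{t,x}}^{1-\alpha},\qquad \alpha=\tfrac{4-p}{p}\in(0,1],
\]
while the range $p\in[1,2]$ is handled directly by Hölder's inequality on the bounded domain $(0,T)\times\T$. The delicate point I expect to be the main obstacle is precisely the regime $\delta<\theta/2$, where the prefactor $\re^{\delta-\theta/2}$ is not uniformly $L^\infty$ and the vacuum set must be treated via a truncation/renormalization near $\{\rho=0\}$; every other step is a routine combination of weak compactness, Bochner-space interpolation, and dominated convergence.
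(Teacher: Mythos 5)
Your argument for \eqref{eq:511} matches the paper's. For \eqref{eq:512}, however, you take a genuinely different route, and it has a real gap. You try to prove strong $L^2_{t,x}$ convergence of $\dx\re^\theta$ (via the Bochner interpolation
$\|\cdot\|_{L^2_tH^1_x}\lesssim\|\cdot\|_{L^2_tL^2_x}^{1/2}\|\cdot\|_{L^2_tH^2_x}^{1/2}$, which is fine) and then multiply by the prefactor $\tfrac12\re^{\delta-\theta/2}$. That works cleanly only when $\delta-\theta/2\ge 0$. You correctly flag the case $\delta<\theta/2$ as the delicate point, but the truncation/Stampacchia sketch is not carried out, and it is not clear it can be: the identity $\re^\delta\dx\re^{\theta/2}=\tfrac{\theta}{\theta+2\delta}\dx\re^{\theta/2+\delta}$ shifts the exponent to $\eta=\theta/2+\delta<\theta$, and for such $\eta$ neither the uniform $L^2_tH^2_x$ bound (the chain-rule terms produce a negative power $\rho^{2(\eta-\theta)}$ which is not controlled near vacuum) nor any direct strong-convergence statement on $\dx\re^\eta$ is available. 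Moreover this regime is not an exotic corner case: it is exactly what is needed to pass to the limit in the weak formulation whenever $\beta>-1$ (for the thin-film case $\beta=0$ one has $\delta=(\beta+3)/8=3/8$ and $\theta/2=5/8$, so $\delta<\theta/2$).

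The paper avoids the case distinction entirely. After noting the weak convergence $\re^\delta\dx\re^{\theta/2}\weakto\rho^\delta\dx\rho^{\theta/2}$ in $L^2_{t,x}$, it upgrades to strong convergence by proving convergence of the $L^2$-norms via the integration by parts
\[
\iint\re^{2\delta}|\dx\re^{\theta/2}|^2\,dxdt=-\frac{\theta}{8\delta}\iint\re^{2\delta}\,\ddx\re^{\theta}\,dxdt,
\]
which turns the quadratic expression into a pairing of the bounded, a.e.- (hence strongly-) convergent factor $\re^{2\delta}$ against the weakly convergent $\ddx\re^\theta$ from \eqref{eq:511}. The only requirement is $\delta>0$, so no hypothesis on the sign of $\delta-\theta/2$ is needed. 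You should adopt this norm-convergence-by-parts step in place of your prefactor decomposition; the final interpolation from $L^2$ to $L^p$, $p<4$, using the uniform $L^4$ bound is then fine and coincides with the paper's.
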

\begin{proof}
It follows from \eqref{eq:54} that the sequence $\{\ddx\re\}_{\e}$ is weakly compact in $L^{2}(0,T;L^{2}(\T))$. Then, by using \eqref{eq:58}, the convergence \eqref{eq:511} follows. Concerning \eqref{eq:511}, by the very same argument we can infer that 
\begin{equation}\label{eq:513}
\dx\re^{\frac{\theta}{2}}\weakto\dx\rho^{\frac{\theta}{2}}
\mbox{ weakly in }L^{p}(0,T;L^{p}(\T))\mbox{ for any }1\leq p\leq 4,
\end{equation}
and therefore $\dx\rho^{\theta}\in L^{4}(0,T;L^{4}(\T))$. It follows from \eqref{eq:513}, \eqref{eq:58}, and \eqref{eq:53}, that 
\begin{equation}\label{eq:514}
\re^{\delta}\dx\re^{\frac{\theta}{2}}\weakto\rho^{\delta}\dx\rho^{\frac{\theta}{2}}\mbox{ in }L^{2}(0,T;L^{2}(\T))\mbox{ for any }\delta>0. 
\end{equation}
To update \eqref{eq:514} from weak convergence to strong convergence we show the convergence of the norms. We note that 
\begin{equation*}
\iint\re^{2\delta}|\dx\re^{\frac{\theta}{2}}|^2\,dxdt
=\frac{\theta}{8\delta}\iint\dx\re^{2\delta}\dx\re^{\theta}\,dxdt=-\frac{\theta}{8\delta}\iint\re^{2\delta}\ddx\re^{\theta}\,dxdt. 
\end{equation*}
By using \eqref{eq:58} and \eqref{eq:511}, since $\delta>0$, we have that 
\begin{equation*}
-\frac{\theta}{8\delta}\iint\re^{2\delta}\ddx\re^{\theta}\,dxdt\rightarrow -\frac{\theta}{8\delta}\iint\rho^{2\delta}\ddx\rho^{\theta}\,dxdt=\iint\rho^{2\delta}|\dx\rho^{\frac{\theta}{2}}|^2\,dxdt. 
\end{equation*}
Thus, we have proven that 
\begin{equation*}
\iint \re^{2\delta}|\dx\re^{\frac{\theta}{2}}|^2\,dxdt\rightarrow \iint\rho^{2\delta}|\dx\rho^{\frac{\theta}{2}}|^2\,dxdt,
\end{equation*}
which, together with \eqref{eq:514}, implies that 
\begin{equation*}
\re^{\delta}\dx\re^{\frac{\theta}{2}}\rightarrow \rho^{\delta}\dx\rho^{\frac{\theta}{2}}\mbox{ strongly in }L^{2}(0,T;L^{2}(\T)). 
\end{equation*}
Finally, using \eqref{eq:53} and \eqref{eq:54} we have that 
\begin{equation*}
\{\re^{\delta}\dx\re^{\frac{\theta}{2}}\}_{\e}\mbox{ is bounded in }L^{4}(0,T;L^{4}(\T)),
\end{equation*}
and therefore 
\begin{equation*}
\re^{\delta}\dx\re^{\frac{\theta}{2}}\rightarrow \rho^{\delta}\dx\rho^{\frac{\theta}{2}}\mbox{ in }L^{p}(0,T;L^{p}(\T))\mbox{ for any }1\leq p< 4. 
\end{equation*}
\end{proof}
\subsection{Proof of the main theorem}
We are now in a position to prove the main result of this paper. 
\begin{proof}[Proof of Theorem \ref{teo:main}]
Let $\psi\in C^{\infty}_{c}([0,T)\times\T)$. Multiplying the first equation of \eqref{eq:app} by $\psi$ and the second equation by $\dx\psi$, integrating by parts and summing up we obtain 
\begin{equation}\label{eq:515}
\begin{aligned}
-\iint\re\partial_t\psi\,dxdt&+\iint\re\me'(\re)\ddx\phie(\re)\ddx\psi\,dxdt\\
&\deps\iint\dx\ue\ddx\psi\,dxdt-\int\rho^0\psi(0)\,dx=0
\end{aligned}
\end{equation}
By using \eqref{eq:58p} we have that as $\e\to 0$
\begin{equation*}
\iint\re\partial_t\psi\,dxdt\rightarrow \iint\rho\partial_t\psi\,dxdt. 
\end{equation*}
By using Cauchy-Schwartz inequality and \eqref{eq:energy} we have 
\begin{equation*}
\deps\iint|\dx\ue||\ddx\psi|\,dxdt\les \sqrt{\deps}\rightarrow 0 
\end{equation*}
as $\e\to 0$. It remains to consider the second term in \eqref{eq:515}. By using the definition of $\phie$ and $\me$ we have that 
\begin{equation*}
\begin{aligned}
\iint\re\me'(\re)\ddx\phie(\re)\ddx\psi\,dxdt&=\frac{2}{\beta+1}\iint\re^{\frac{\beta+3}{2}}\ddx\re^{\frac{\beta+1}{2}}\ddx\psi\,dxdt-\e\iint\re^{\frac{\beta+3}{2}}\ddx\re^{-\frac{1}{2}}\ddx\psi\,dxdt\\
&+\frac{\e}{\beta+1}\iint\re^{\frac{1}{2}}\ddx\re^{\frac{\beta+1}{2}}\ddx\psi\,dxdt-\frac{e^2}{2}\iint\re^{\frac{1}{2}}\ddx\re^{-\frac{1}{2}}\ddx\psi\,dxdt\\
&=I^{\e}_{1}+I^{\e}_{2}+I^{\e}_{3}+I^{\e}_{4}.
\end{aligned}
\end{equation*}
Let us consider $I^{\e}_1$. By a direct calculation we have that
\begin{equation*}
\begin{aligned}
\frac{2}{\beta+1}\iint\re^{\frac{\beta+3}{2}}\ddx\re^{\frac{\beta+1}{2}}\ddx\psi\,dxdt
&=\frac{1}{\theta}\iint\re^{\beta+2-\theta}\ddx\re^{\theta}\ddx\psi\,dxdt\\
&-\frac{\beta+3}{\theta^2}\iint\re^{\beta+2-\theta}|\dx\re^{\frac{\theta}{2}}|^2\ddx\psi\,dxdt.
\end{aligned}
\end{equation*}
Note that if $\beta>-3$, the $\beta+2-\theta>0$. Then, by using \eqref{eq:58p} and \eqref{eq:511} we have that 
\begin{equation*}
\iint\re^{\beta+2-\theta}\ddx\re^{\theta}\ddx\psi\,dxdt
\rightarrow \iint\rho^{\beta+2-\theta}\ddx\rho^{\theta}\ddx\psi\,dxdt
\end{equation*}
as $\e\to 0$. Moreover, by using \eqref{eq:412} we have that 
\begin{equation*}
\iint\re^{\beta+2-\theta}|\dx\re^{\frac{\theta}{2}}|^2\ddx\psi\,dxdt
\rightarrow \iint\rho^{\beta+2-\theta}|\dx\rho^{\frac{\theta}{2}}|^2\ddx\psi\,dxdt
\end{equation*}
as $\e\to 0$. Thus, as $\e\to 0$
\begin{equation*}
I^1_{\e}\rightarrow \frac{1}{\theta}\iint\rho^{\beta+2-\theta}\ddx\rho^{\theta}\ddx\psi\,dxdt-\frac{\beta+3}{\theta^2}\iint\rho^{\beta+2-\theta}|\dx\rho^{\frac{\theta}{2}}|^2\ddx\psi\,dxdt.
\end{equation*}
Next, we consider $I^2_{\e}$. By a direct calculation we have 
\begin{equation*}
\begin{aligned}
|I^2_{\e}|&\les\|\ddx\psi\|_{L^{\infty}_x}\e\iint\re^{\frac{1}{4}}(|\ddx\re^{\frac{2\beta+3}{4}}|
+|\dx\re^{\frac{2\beta+3}{8}}|^2)\,dxdt\\
&\les\|\ddx\psi\|_{L^{\infty}_x}\left(\iint \re^{\frac{1}{2}}\,dxdt\right)^{\frac{1}{2}}
\left(\e^2\iint|\ddx\re^{\frac{2\beta+3}{4}}|^2
+|\dx\re^{\frac{2\beta+3}{8}}|^4\,dxdt\right)^{\frac{1}{2}}\\
&\les\|\ddx\psi\|_{L^{\infty}_x}\sqrt{\e}\left(\iint \re^{\frac{1}{2}}\,dxdt\right)^{\frac{1}{2}}
\left(\e\iint|\ddx\re^{\frac{2\beta+3}{4}}|^2
+|\dx\re^{\frac{2\beta+3}{8}}|^4\,dxdt\right)^{\frac{1}{2}}\\
&\les\|\ddx\psi\|_{L^{\infty}_x} \sqrt{\e}, 
\end{aligned}
\end{equation*}
where in the last line we used \eqref{eq:53} and \eqref{eq:55}. The term $I^{3}_{\e}$ is also treated similarly. Indeed, by using \eqref{eq:53} and \eqref{eq:55}, we have that 
\begin{equation*}
\begin{aligned}
|I^{3}_{\e}|&\les\|\ddx\psi\|_{L^{\infty}_x}\sqrt{\e}\left(\iint \re^{\frac{1}{2}}\,dxdt\right)^{\frac{1}{2}}
\left(\e\iint|\ddx\re^{\frac{2\beta+3}{4}}|^2
+|\dx\re^{\frac{2\beta+3}{8}}|^4\,dxdt\right)^{\frac{1}{2}}\\
&\les\|\ddx\psi\|_{L^{\infty}_x}\sqrt{\e}. 
\end{aligned}
\end{equation*}
Finally, we consider $I^4_{\e}$. We have that 
\begin{equation*}
\begin{aligned}
|I^4_{\e}|&\les|\ddx\psi\|_{L^{\infty}_x}\e^3\iint\re^{\frac{1}{4}}(|\ddx\re^{-\frac{1}{4}}|
+|\dx\re^{-\frac{1}{8}}|^2)\,dxdt\\
&\les\|\ddx\psi\|_{L^{\infty}_x}\sqrt{\e}\left(\iint \re^{\frac{1}{2}}\,dxdt\right)^{\frac{1}{2}}
\left(\e^3\iint|\ddx\re^{-\frac{1}{4}}|^2
+|\dx\re^{-\frac{1}{8}}|^4\,dxdt\right)^{\frac{1}{2}}\\
&\les\|\ddx\psi\|_{L^{\infty}_x}\sqrt{\e}
\end{aligned}
\end{equation*}
where in the last line we used \eqref{eq:53} and \eqref{eq:56}. Thus, 
in conclusion
\begin{equation*}
\begin{aligned}
\iint\re\me'(\re)\ddx\phie\ddx\psi\,dxdt\rightarrow&\frac{1}{\theta}\iint\rho^{\beta+2-\theta}\ddx\rho^{\theta}\ddx\psi\,dxdt\\
&-\frac{\beta+3}{\theta^2}\iint\rho^{\beta+2-\theta}|\dx\rho^{\frac{\theta}{2}}|^2\ddx\psi\,dxdt,
\end{aligned}
\end{equation*}
as $\e\to 0$ and the proof is complete. 
\end{proof}
\appendix
\section{\empty}
For the sake of completeness, we give the full proof of inequality \eqref{eq:bdentropyintro2}. 
\begin{lemma}\label{lem:inequality}
    Let $\beta\in\R\setminus\{-\frac{5}{3}, -1\}$. There exists a constant $C>0$, depending on $\beta$ such that
    \begin{equation}\label{ineq}
        \int |\del_x^2 \rho^\theta|^2 \,dx \leq C \int \rho^{\frac{\beta+3}{2}}|\del_x^2\rho^{\frac{\beta+1}{2}}|^2 \,dx,
    \end{equation}
    \noindent where $\theta = \frac{3\beta+5}{4}$.
\end{lemma}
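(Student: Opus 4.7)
The plan is to reduce \eqref{ineq} to a single weighted inequality in the new variable $v:=\rho^{(\beta+1)/2}$, which is well-defined and non-trivial precisely because $\beta\neq -1$. Setting $\sigma:=\frac{2\theta}{\beta+1}=\frac{3\beta+5}{2(\beta+1)}$, the identities $\rho^{\theta}=v^{\sigma}$ and $\rho^{(\beta+3)/2}=v^{2\sigma-2}$ rewrite \eqref{ineq} equivalently as
\[
\int|\del_x^2 v^{\sigma}|^2\,dx\leq C\int v^{2\sigma-2}|\del_x^2 v|^2\,dx.
\]
The other excluded value $\beta=-\tfrac{5}{3}$ corresponds to $\sigma=0$, in which case both sides vanish, so the statement is vacuous there.

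I would then apply the chain rule $\del_x^2 v^{\sigma}=\sigma v^{\sigma-1}\del_x^2 v+\sigma(\sigma-1)v^{\sigma-2}(\del_x v)^2$, square, and use $(a+b)^2\leq 2a^2+2b^2$ to obtain
\[
\int|\del_x^2 v^{\sigma}|^2\,dx\leq 2\sigma^2\int v^{2\sigma-2}|\del_x^2 v|^2\,dx+2\sigma^2(\sigma-1)^2\int v^{2\sigma-4}|\del_x v|^4\,dx.
\]
The first summand already matches the right-hand side of \eqref{ineq} with the correct weight $v^{2\sigma-2}$, so the remaining task is to bound the fourth-order term $\int v^{2\sigma-4}|\del_x v|^4\,dx$ by the same quantity.

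For that, the key step is the auxiliary estimate
\[
\int v^{a}|\del_x v|^4\,dx\leq \frac{9}{(a+1)^2}\int v^{a+2}|\del_x^2 v|^2\,dx,\qquad a:=2\sigma-4,
\]
which I would prove by a direct integration by parts on the torus: writing $v^{a}\del_x v=\frac{1}{a+1}\del_x(v^{a+1})$ and moving $\del_x$ off the cubic factor yields
\[
\int v^{a}|\del_x v|^4\,dx = -\frac{3}{a+1}\int v^{a+1}|\del_x v|^2\del_x^2 v\,dx,
\]
after which Cauchy--Schwarz with the splitting $v^{a+1}|\del_x v|^2\del_x^2 v=\bigl(v^{a/2}|\del_x v|^2\bigr)\bigl(v^{a/2+1}\del_x^2 v\bigr)$ allows one to absorb one factor of $\bigl(\int v^{a}|\del_x v|^4\bigr)^{1/2}$ from the left. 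The identity $a+1=2\sigma-3=\frac{2}{\beta+1}$ shows that this coefficient is non-zero exactly for $\beta\neq -1$, which is the algebraic reason the excluded values appear. The main obstacle is essentially the exponent bookkeeping, in particular the identity $2\sigma-2=\frac{\beta+3}{\beta+1}$ that makes the first chain-rule term line up with the right-hand side of \eqref{ineq}; beyond that, one should tacitly assume $\rho$ is smooth and bounded away from zero to justify the integrations by parts, the general case being recovered by the approximation procedure used elsewhere in the paper.
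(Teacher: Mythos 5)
Your argument is correct and gives a genuinely different proof from the paper's. The paper works directly with $w:=\rho^{\theta/2}$: it establishes the exact algebraic identity
\[
\left(\tfrac{2\theta}{\beta+1}\right)^2\int\rho^{\frac{\beta+3}{2}}|\del_x^2\rho^{\frac{\beta+1}{2}}|^2\,dx
=\int|\del_x^2\rho^{\theta}|^2\,dx
+\left(\tfrac{(\beta+3)^2}{\theta^2}-\tfrac{8(\beta+3)}{3\theta}\right)\int|\del_x\rho^{\theta/2}|^4\,dx,
\]
obtained after the integration by parts $\int\del_x^2\rho^{\theta}\,|\del_x\rho^{\theta/2}|^2\,dx=\tfrac{4}{3}\int|\del_x\rho^{\theta/2}|^4\,dx$, and then invokes the Bernis inequality $\tfrac{16}{9}\int|\del_x\rho^{\theta/2}|^4\,dx\le\int|\del_x^2\rho^{\theta}|^2\,dx$ to absorb the possibly-negative coefficient, checking that $\tfrac{16}{9}+\tfrac{(\beta+3)^2}{\theta^2}-\tfrac{8(\beta+3)}{3\theta}>0$. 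You instead change variables to $v=\rho^{(\beta+1)/2}$, use the pointwise chain-rule bound with $(a+b)^2\le 2a^2+2b^2$, and prove the needed weighted interpolation estimate $\int v^a|\del_x v|^4\,dx\le\tfrac{9}{(a+1)^2}\int v^{a+2}|\del_x^2 v|^2\,dx$ from scratch by one integration by parts plus Cauchy--Schwarz. Your route is self-contained (no appeal to the cited unweighted inequality) and exposes cleanly why $\beta=-1$ is excluded ($a+1=\tfrac{2}{\beta+1}$ must be non-zero); the paper's route is tighter because it starts from an identity rather than a crude squaring, giving a sharper constant. Both are valid.

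One small inaccuracy: for $\beta=-\tfrac{5}{3}$ you say ``both sides vanish,'' but in fact only the left-hand side vanishes (since $\sigma=0$ makes $v^\sigma\equiv 1$), while the right-hand side $\int v^{-2}|\del_x^2 v|^2\,dx$ is generally positive. The conclusion that the inequality holds trivially in that case is still correct; the phrasing should be ``the left-hand side vanishes,'' not ``vacuous.''
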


\begin{proof}
Let $\beta\in\R\setminus\{-\frac{5}{3}, -1\}$ and $\theta:=(3\beta+5)/4$. We first note that 
\begin{equation*}
\ddx\rho^{\frac{\beta+1}{2}}=\left(\frac{\beta+1}{2\theta}\right)
\rho^{\frac{\beta+1}{2}-\theta}\left(\ddx\rho^{\theta}-\frac{\beta+3}{\theta}|\dx\rho^{\frac{\theta}{2}}|^2\right).
\end{equation*}
Thus
\begin{equation*}
\int\rho^{\frac{\beta+3}{2}}|\ddx\rho^{\frac{\beta+1}{2}}|^2\,dx
=\left(\frac{\beta+1}{2\theta}\right)^2
\int\left(\ddx\rho^{\theta}-\frac{\beta+3}{\theta}|\dx\rho^{\frac{\theta}{2}}|^2\right)^2\,dx.
\end{equation*}
Expanding the square we have 
\begin{equation*}
\begin{aligned}
\int|\ddx\rho^{\theta}|^2\,dx+\frac{(\beta+3)^2}{\theta^2}\int|\dx\rho^{\frac{\theta}{2}}|^4\,dx&=\left(\frac{2\theta}{\beta+1}\right)^2\int\rho^{\frac{\beta+3}{2}}|\ddx\rho^{\frac{\beta+1}{2}}|^2\,dx\\
&+\frac{2(\beta+3)}{\theta}\int\ddx\rho^{\theta}|\dx\rho^{\frac{\theta}{2}}|^2\,dx. 
\end{aligned}
\end{equation*}
Integrating by parts we have that 
\begin{equation*}
\int\ddx\rho^{\theta}|\dx\rho^{\frac{\theta}{2}}|^2\,dx
=\frac{4}{3}\int|\dx\rho^{\frac{\theta}{2}}|^4\,dx,
\end{equation*}
and thus
\begin{equation*}
\int|\ddx\rho^{\theta}|^2\,dx+\left(\frac{(\beta+3)^2}{\theta^2}-\frac{8(\beta+3)}{3\theta}\right)\int|\dx\rho^{\frac{\theta}{2}}|^4\,dx=\left(\frac{2\theta}{\beta+1}\right)^2\int\rho^{\frac{\beta+3}{2}}|\ddx\rho^{\frac{\beta+1}{2}}|^2\,dx.
\end{equation*}
Finally, recalling the inequality 
\begin{equation*}
\frac{16}{9}\int|\dx\rho^{\frac{\theta}{2}}|^4\,dx\leq \int|\ddx\rho^{\theta}|^2\,dx,
\end{equation*}
see \cite{Be96}, we deduce that for any given $\beta\not\in\{-5/3\}$ 
\begin{equation*}
\frac{16}{9}+\frac{(\beta+3)^2}{\theta^2}-\frac{8(\beta+3)}{3\theta}>0,
\end{equation*}
and so there exists a constant $C=C(\beta)>0$ such that \eqref{ineq} holds. 
    
\end{proof}

\end{document}